\documentclass[11pt]{amsart}

\usepackage[utf8]{inputenc}
\usepackage[english]{babel}
\usepackage{csquotes}		%For quotes. Needed for the bibliography too, I think.

\usepackage{geometry}
\geometry{a4paper,top=3cm,bottom=3cm,left=3cm,right=3cm,heightrounded,bindingoffset=5mm}
\usepackage{enumerate}
\usepackage{bbm}
\usepackage{xcolor}
\colorlet{green}{orange}
%%%%%%%%---------COMMANDS---------%%%%%%%%%%%%%%
%%%%%%%%%%%%%%%%%%%%%%%%%%%%%%%%%%%%%%%%%%%%%%%%
%\include{Commands0}
% ------------------------mathbb-----------------------------------
\newcommand{\RR}{{\mathbb R}}
\newcommand{\NN}{{\mathbb N}}

% ------------------------mathcal-----------------------------------
\renewcommand{\Mc}{\mathcal{M}}
\newcommand{\Hc}{\mathcal{H}}

%-------------------------boldface-----------------------------------
\newcommand{\Mb}{\mathbf{M}}

% ------------------------Operators, arrows, similar--------------------
\newcommand{\weakly}{\rightharpoonup}

\newcommand{\compact}{\subset\subset}
\newcommand{\rest}{\llcorner}
\DeclareMathOperator{\supp}{supp}
\DeclareMathOperator{\dive}{div}

\renewcommand{\ge }{\geq}
\renewcommand{\le }{\leq}

\newcommand{\de}{\partial}
\newcommand{\compl}{c}			%COMPLEMENT OF A SET

%-------------------------Other letters--------------------------------
\newcommand{\eps}{\varepsilon}		%Renaming $\eps$
\let\oldphi\phi						%Replacing $\phi$ with $\varphi$ and keeping $\varphi$ in $\phin$
\newcommand{\phin}{\oldphi}
\renewcommand{\phi}{\varphi}

%-----------------------Accents-----------------------------------------
\renewcommand{\hat }{\widehat}
\renewcommand{\tilde }{\widetilde}

%------------------Other commands, specific -----------
\newcommand{\hk}{\Psi}			%heat kernel
\DeclareMathOperator{\Clos}{Clos}		%Closure of a set
\newcommand{\Gr}{\mathrm{Gr}}	%Grassmanian 

\DeclareMathOperator{\osc}{osc} 	%Oscillations

\newcommand{\tSigma}{\tilde\Sigma}  %Rescalings of the support

\newcommand{\hhk}{\hat\hk}	%truncated heat kernel
\newcommand{\Var}[1]{V_{{#1}}}	%Varifolds

\newcommand{\dif}{\mathop{}\!\mathrm{d}}	%Differentials
\newcommand{\holder}{H\"{o}lder}
\newcommand{\trace}{\mathrm{trace}}

\newcommand{\Id}{\mathbbm{1}}
\newcommand{\transp}{\mathsf{T}}

%%%%%%%%-------TITLE-----------%%%%%%%%%%%%%%
%%%%%%%%%%%%%%%%%%%%%%%%%%%%%%%%%%%%%%%%%%%%%
\title{A short proof of Allard's and Brakke's regularity theorems}

\author[G.~De Philippis]{Guido De Philippis}
\address{\textit{G.~De Philippis:} Courant Institute of Mathematical Sciences, New York University, 251 Mercer St., New York, NY 10012, USA.}
\email{guido@cims.nyu.edu}

\author[C.~Gasparetto]{Carlo Gasparetto}
\address{\textit{C.~Gasparetto:} Dipartimento di Matematica, Università di Pisa, Largo Bruno Pontecorvo 5, 56127 Pisa, Italy}
\email{carlo.gasparetto@dm.unipi.it}

\author[F.~Schulze]{Felix Schulze}
\address{\textit{F.~Schulze:} Department of Mathematics, Zeeman Building, University of Warwick, Gibbet Hill Road, Coventry CV4 7AL,
UK}
\email{felix.schulze@warwick.ac.uk}

\thanks{{\bf Acknowledgments.} 
	G.D.P has been partially supported by the NSF grant DMS 2055686 and by the Simons Foundation. C.G. is supported by the European
	Research Council (ERC), under the European Union’s Horizon 2020 research and innovation program, through the project ERC
	VAREG - Variational approach to the regularity of the free boundaries (grant agreement No. 853404) and partially supported by INDAM-GNAMPA. Part of this work was carried out while C.G. was a PhD student at SISSA-Trieste}

\subjclass[2010]{}

\keywords{Minimal surfaces, mean curvature flow, regularity theory}

%%%%%%%%------BIBLIOGRAPHY-----%%%%%%%%%%%%%%
%%%%%%%%%%%%%%%%%%%%%%%%%%%%%%%%%%%%%%%%%%%%%
%\usepackage[style = numeric, backend=biber]{biblatex}
%\addbibresource{citations2.bib}

%%%%%%%%---------MATHS---------%%%%%%%%%%%%%%
%%%%%%%%%%%%%%%%%%%%%%%%%%%%%%%%%%%%%%%%%%%%%
\usepackage{mathtools,
	amsthm,
	amssymb		%Various math symbols
}
\numberwithin{equation}{section}						%With this line, equations are numbered as (section).(equation)
\renewenvironment{proof}[1][\proofname]{{\par\medskip\noindent\bfseries #1. }}{\qed\par}		%This line allows me to write \begin{proof}[Proof of Lemma ...]\end{proof}

%%%%%%%---------REFERENCES-----%%%%%%%%%%%%%%
%%%%%%%%%%%%%%%%%%%%%%%%%%%%%%%%%%%%%%%%%%%%%
\usepackage[hypertexnames=false]{hyperref}		%Main package for taking care of cross-references etc

\usepackage{cleveref}		%It allows to refer to a Theorem as "\Cref{thm:A}" instead of "Theorem \ref{thm:A}"
\crefname{subsection}{Subsection}{Subsections}		%This line fixes the fact that the default option in cleveref has "Section" as cross-ref name for subsection. This does not fix subsubsections or other sub-type things!

%COMMENT BEFORE FINAL VERSION
%%The next three lines are so cleveref and showkeys work fine together. I don't know what it does, tbh
%\makeatletter
%	\SK@def\Cref#1{\SK@\SK@@ref{#1}\SK@Cref{#1}}%
%\makeatother

%The next lines define the type of mathematical statements I'm going to use. These lines must be called AFTER cleveref for them working properly. See cleveref documentation.
\theoremstyle{plain}
\newtheorem{theorem}{Theorem}[section]
\newtheorem{proposition}[theorem]{Proposition}
\newtheorem{lemma}[theorem]{Lemma}

\newtheorem{definition}[theorem]{Definition}
\newtheorem{assumption}[theorem]{Assumption}
\theoremstyle{definition}
\newtheorem{remark}[theorem]{Remark}

\begin{document}
%\begin{abstract}
%\end{abstract}
\maketitle

\begin{abstract}
{
We give new short proofs of Allard’s regularity theorem for varifolds with bounded first variation and Brakke’s regularity theorem for integral Brakke flows with bounded forcing. They are based on a decay of flatness, following from weighted versions of the respective monotonicity formulas, together with a characterization of non-homogeneous blow-ups using the viscosity approach introduced by Savin.
}
\end{abstract}

\section{Introduction}

Allard's and Brakke's {\(\varepsilon\)-regularity} theorems are key tools in the study of, respectively, minimal surfaces and mean curvature flows, and they can be roughly stated as follows: 

\medskip

\emph{If a \(m\)-dimensional minimal surface (resp.~the space-time {track} of a  mean curvature flow)  is sufficiently flat in the ball of radius \(1\) (resp.~parabolic cylinder of radius \(1\))  and its area  is roughly the one of the unit \(m\)-dimensional disk (resp.~the weighted Gaussian density of a unit disk) then in a smaller ball (resp.~parabolic cylinder) it can be written as the graph of a smooth function which enjoys suitable a-priori estimates.}

\medskip

See  Theorems \ref{thm:allard_main} and \ref{thm:brakke_interior}  below for the rigorous statement. Note {these results are also relevant} in the smooth category, since  the scale and the regularity of the graphical parametrization of the surface only depend on the a priori assumption of (weak) closeness to the \(m\)-dimensional unit disk.

The original proofs by Allard and Brakke are modeled on the pioneering ideas introduced by De Giorgi in the regularity theory for co-dimension \(1\) area minimizing surfaces, \cite{degiorgi61} and on their implementation done by Almgren in \cite{Almgren1968}. In particular the proof is, roughly speaking, divided into the following steps:

\begin{itemize}
\item[(a)] Under the desired assumption it is possible to show that most of the surface can be covered by the graph of a Lipschitz function, whose \(W^{1,2}\) norm can be estimated by the difference in area between the surface and the plane.
\item[(b)] Since the minimal surface equation (respectively the mean curvature flow)  linearizes on the Laplacian equation (resp.~the heat equation), this function is close to a harmonic (resp.~caloric) function which enjoys strong a priori estimates.
\item[(c)] These estimates can be pulled back to the minimal surface (resp.~space-time track of the mean curvature flow) to show that the  initial assumptions are satisfied also in the ball of radius {$1/2$}.  A suitable iteration provides then the conclusion. 
\end{itemize}
In this approach the two main difficulties  lie in the approximation procedure in step (a) and in proving that the closeness in step  (b) is in a strong enough topology to be able to pull-back the estimates from the linearized equation. Among various references, we refer the reader to \cite{DeLellis_allard} for a very clear account of Allard`s theorem and to \cite{kasai_tonegawa} for a simplified proof of Brakke theorem. Inspired by the work of Caffarelli and Cordoba \cite{caffarelli-cordoba}, in \cite{savin07} Savin provided a viscosity type approach to the above proofs, in which step (a) is completely avoided and step (b) is replaced by a partial Harnack inequality, obtained via Aleksandrov-Bakelmann-Pucci (ABP) type estimates, see \cite{savin2017viscosity} and \cite{wangsmall} for the extension to the minimal surface system and to parabolic equations, respectively. This approach has been recently exploited by the second named author to prove a boundary version of Brakke's regularity theorem, \cite{gaspa_brakke_bdry}. We also mention that  Savin's partial Harnack inequality has been a crucial ingredient in the proof of the De Giorgi conjecture on solutions of the Allen Cahn equation, \cite{Savin2009}.

In this note  we show how combining both the \enquote{variational} and the \enquote{viscosity} approach, it is possible to obtain a very short and {self-sufficient} proof of {both} Allard's and Brakke's theorems. The key observation is that while viscosity techniques are very robust in allowing to pass to the limit in the equation under \(L^\infty\) convergence (a key step in Savin's approach) the ABP estimate can be replaced by a simple variational argument based on the fact that coordinates are harmonic (resp.~caloric) when restricted to the minimal surface (resp.~mean curvature flow) and that for harmonic functions the mean value inequality can be easily obtained by testing the weak formulation of the equation with a suitable truncation of the fundamental solution, see for instance \cite{caffarelli1998obstacle}.

This short note is organized as follows: in \Cref{sec:allard}  we prove {Allard's} theorem and in \Cref{sec:brakke} we prove Brakke's theorem. In order to make the note {self-contained}, we conclude with an appendix where we {record} the proof of the maximum principle for varifolds and Brakke flows. Although the proofs are done in the natural  context of varifolds and Brakke flows, we invite the reader to take in mind the simple case of a smooth surface with zero mean curvature and {a smooth mean curvature flow}.

For the purpose of open access, the authors have applied a Creative Commons Attribution (CC- BY) license to any Author Accepted Manuscript version arising from this submission.

\section{Allard's regularity theorem}\label{sec:allard}
In the following, we denote by $\Mc_m(U)$ the space of {$m$-dimensional rectifiable Radon measures on $U$}, namely those Radon measures $M$ on $U$ for which there is a $m$-dimensional rectifiable set $E\subset U$ with $\Hc^m(E)<\infty$ and $M\ll\Hc^m\rest E$. If $M\in\Mc_m(U)$, then $M = \Theta^m(M,\cdot)\Hc^m$, where 
\begin{equation*}
	\Theta^m(M,x) = \lim_{r\searrow0}\frac{M(B_r(x))}{\omega_mr^m}
\end{equation*}
wherever the limit exists.
{For $M$-almost every $x$, the approximate tangent plane to $M$ at $x$ is well defined and we denote it by $T_xM\in\Gr(m,d)$.
For $S\in\Gr(m,d)$ and $F\in C^1(U;\RR^d)$, we introduce the notation
\begin{equation*}
	\dive_S F(x) = \sum_{i=1}^m\nabla F(x)\eta_i\cdot\eta_i,
\end{equation*}
where $\{\eta_i\}_{i=1}^m$ is any orthonormal basis of $S$.
Next, for every $p\in(1,+\infty]$, we let $\Mc_m^p(U)$ be the set of those measures $M\in\Mc_m(U)$ such that, for every $W\compact U$ there is $h_W\in \RR$ such that
\begin{equation}\label{eq:bounded_first_var}
	\int\dive_{T_xM}F(x)\dif M(x) \le h_W\left(\int|F|^{p'}\dif M\right)^{1/p'}
\end{equation}
for every $F\in C^1_c(W;\RR^d)$, where $p'$ is the {conjugate} exponent of $p$.
}To each $M\in\Mc_m^p(U)$ we associate a vector field $H_M\in L^p_{loc}(M;\RR^d)$ such that
\begin{equation}\label{eq:first_var}
	\int \dive_{T_xM} F\dif M = -\int H_M\cdot F\dif M
\end{equation}
for every $F\in C^1_c(U;\RR^d)$. We call $H_M$ the \textit{generalized mean curvature vector of $M$}. Whenever the indication of $M$ is unnecessary, we write $H$ in place of $H_M$. Before stating the main result of this section, we introduce the following notation: for $S\in\Gr(m,d)$, {$M\in\Mc^\infty_m(B_R)$ {and $x_0,r$ such that $B_r(x_0)\subset B_R$}, we let
\begin{equation}\label{eq:def_osc}
	\osc_S(M, B_r(x_0)) = \frac{1}{2}\sup\big\{|S^\perp(x-y)|\colon x,y\in \supp M\cap B_r(x_0)\big\}
	%	\inf\{h\colon\exists y\in\RR^d\,\forall x\in \suppM\cap B_r,\,|S^\perp(x-y)|\le h\}.
\end{equation}
which corresponds to the radius of the smallest cylinder of the form $\{x\in\RR^d\colon|S^\perp(x-y)|\le h\}$ for some $y\in\RR^d$ that contains $\supp M\cap B_r(x_0)$.

The goal of the present section is proving the following version of Allard's theorem:
\begin{theorem}[Allard's regularity theorem]\label{thm:allard_main}
	For every $\alpha\in(0,1)$, there are $\delta_0>0$ and $C>0$ with the following property. Let $M\in\Mc_m^\infty(B_1)$ and assume that $0\in\supp M$, $\Theta^m(M,x)\ge1$ for $M$-almost every $x$,
	\begin{gather*}
		M(B_1)\le(1+\delta_0)\omega_m\qquad\mbox{and}\qquad		\|H_M\|_{L^\infty(M)}\le\delta_0.
	\end{gather*}
%	and
%	\begin{equation*}
%		\|H_M\|_{L^\infty(M)}\le\delta_0.
%	\end{equation*}

	Then $\supp M\cap B_{1/2}$ is the graph of some function $u\in C^{1,\alpha}(B_1^m;\RR^{d-m})$ with
	$$||u||_{C^{1,\alpha}}\le C\bigg(\inf_{S\in\Gr(m,d)}\osc_S(M,B_1)+||H||_{L^\infty(M)}\bigg).$$
\end{theorem}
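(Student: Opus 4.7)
The plan is to deduce the theorem from an iterative one-step improvement of flatness. Setting
\begin{equation*}
E(x_0, r) := r^{-1}\inf_{S \in \Gr(m,d)} \osc_S(M, B_r(x_0)) + \|H_M\|_{L^\infty(M\rest B_r(x_0))},
\end{equation*}
I look for constants $\theta \in (0, 1/2)$ and $\delta_1 > 0$ depending only on $\alpha$ such that $E(0, \theta) \le \theta^{\alpha} E(0, 1)$ whenever $E(0,1) \le \delta_1$. Applied at every basepoint $x_0 \in \supp M \cap B_{1/4}$ and every dyadic scale, this decay yields an $\alpha$-H\"older modulus of continuity for the approximate tangent plane $T_{x_0}M$, from which the $C^{1,\alpha}$ graphical representation with the stated norm bound follows by a standard Campanato-type argument.

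The one-step improvement I would prove by contradiction and blow-up. Assume it fails along a sequence $M_k$ satisfying the hypotheses with $\varepsilon_k := E_k(0,1) \to 0$, and, after a rotation, let $S_k = \RR^m \times \{0\}$ realize the infimum defining $E_k(0,1)$. Writing $x = (x', y)$ with $x' \in \RR^m$, the flatness hypothesis gives $\supp M_k \cap B_{3/4} \subset \RR^m \times B^{d-m}_{2\varepsilon_k}$. The weighted monotonicity formula, together with the mass bound $M_k(B_1) \le (1+\delta_0)\omega_m$ and the density bound $\Theta^m(M_k, \cdot) \ge 1$, should yield two compactness ingredients: first, that the rescaled supports $\sigma_k(\supp M_k \cap B_{3/4}) := \{(x', \varepsilon_k^{-1} y) : (x', y) \in \supp M_k \cap B_{3/4}\}$ are eventually Lipschitz graphs of uniformly bounded slope over $B_{3/4}^m$; second, that along a subsequence they converge in Hausdorff distance to the graph of some Lipschitz function $u : B_{3/4}^m \to \RR^{d-m}$.

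The core of the argument is showing that $u$ is a viscosity solution of $|\Delta u| \le 1$ (note $\varepsilon_k^{-1}\|H_{M_k}\|_\infty \le 1$ by definition of $\varepsilon_k$). Given a smooth $\phi$ touching $u$ strictly from above at $x_0 \in B_{1/2}^m$, perturb by $\eta|x'-x_0|^2$ for small $\eta > 0$ and consider the family of graphs
\begin{equation*}
\Gamma_k^t := \{(x', \varepsilon_k\phi(x') + \varepsilon_k \eta|x'-x_0|^2 + t) : x' \in B_{3/4}^m\}, \qquad t \in \RR.
\end{equation*}
Sliding $t$ downward from large positive values, the first value $t_k$ at which $\Gamma_k^{t_k}$ touches $\supp M_k$ corresponds, by uniform convergence of $\sigma_k(\supp M_k)$ to $\graph u$, to a contact point $z_k \to x_0$. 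The mean curvature vector of $\Gamma_k^{t_k}$ at $z_k$ is $\varepsilon_k(\Delta\phi(z_k) + 2m\eta)e + O(\varepsilon_k^2)$ with $e$ the upward unit normal. The maximum principle for varifolds recorded in the appendix, applied to this smooth barrier lying above $\supp M_k$ and touching it at $z_k$, yields $\varepsilon_k(\Delta\phi(z_k) + 2m\eta) \le \|H_{M_k}\|_\infty + O(\varepsilon_k^2)$. Dividing by $\varepsilon_k$ and letting first $k \to \infty$ and then $\eta \to 0$ gives $\Delta\phi(x_0) \le 1$; the symmetric argument from below yields $\Delta\phi(x_0) \ge -1$, so $|\Delta u| \le 1$ in the viscosity sense. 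Consequently $u \in C^{1,\beta}_{\mathrm{loc}}$ for every $\beta < 1$ with a uniform estimate, and a Taylor expansion at $x_0$ contradicts the failure of the one-step decay once $\theta$ is chosen small.

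The main obstacle I foresee is the compactness step producing the single-sheeted Lipschitz graph $u$. One must rule out mass concentration or spiking of $\supp M_k$ away from the limiting horizontal plane, which is precisely where the (weighted) monotonicity formula is decisive: any such spike would carry a definite amount of mass in conflict with $M_k(B_1) \le (1+\delta_0)\omega_m$. The subsequent viscosity/touching argument must then be carried out rigorously even though $\supp M_k$ is only a closed rectifiable set, which is exactly the situation that the varifold maximum principle of the appendix is designed to handle.
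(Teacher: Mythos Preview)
Your overall architecture matches the paper's: reduce to a one-step improvement of flatness, prove that by contradiction and vertical blow-up, identify the limit as the graph of a function solving the linearized equation via the varifold maximum principle, and conclude by Schauder estimates. The viscosity touching argument you describe is close in spirit to the paper's Lemma~2.8.

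However, the proposal has a genuine gap exactly where you flag the ``main obstacle,'' and your suggested resolution is not the right mechanism. You assert that weighted monotonicity plus the mass bound forces the rescaled supports $\sigma_k(\supp M_k)$ to be \emph{Lipschitz graphs of uniformly bounded slope} for large $k$. This is too strong and is essentially the conclusion of the theorem. At finite $k$ nothing prevents $\supp M_k$ from being multi-sheeted or highly wrinkled; what must be shown is only that the Hausdorff limit $\tilde\Sigma$ is single-valued over $S$. Your proposed argument---``any spike would carry a definite amount of mass''---addresses vertical mass concentration, but the real enemy is two (or more) nearly planar sheets at different heights, which is perfectly consistent with a small total mass and small oscillation before rescaling. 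The paper resolves this via a \emph{partial Harnack inequality} (Proposition~2.4): using the weighted monotonicity formula with the convex test functions $f_i(x)=(\pm(x-y_0)\cdot\omega-\tfrac{\eps R}{2})^+$, one shows that two far-apart points in $\Sigma\cap B_{\eta R}$ would each capture nearly half of the available weighted mass on disjoint supports, contradicting the $\tfrac32$ density bound. Iterating gives a H\"older decay $\osc(B_r)\le C\eps r^\beta$ (Proposition~2.6), which is precisely what forces the blow-up limit to be a single-valued $C^{0,\beta}$ function---not Lipschitz, and not at finite $k$.

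Two smaller points. First, your excess $E(x_0,r)=r^{-1}\osc + \|H\|_\infty$ does not scale correctly: the second summand is scale-invariant under parabolic dilation, so $E(0,\theta)\le\theta^\alpha E(0,1)$ cannot hold unless $\|H\|_\infty=0$; the paper uses $r^{-1}\osc + C\|H\|_\infty r$ instead. Second, your sliding-barrier argument is written for codimension one ($\phi$ ``touching $u$ from above'' and a single upward normal $e$), but $u$ takes values in $\RR^{d-m}$. The paper handles arbitrary codimension by working with the scalar $G_j(x)=\tfrac12|\eps_j^{-1}S^\perp x - h(Sx)|^2 + \tfrac{\delta}{2}|Sx|^2$, where $h$ is the harmonic competitor, and applying the maximum principle to $G_j|_{\Sigma^j}$; you would need an analogous reduction.
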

{If $H$ is more regular, then Schauder estimates entail higher regularity for $\supp M$, as well.}
We shall prove \Cref{thm:allard_main} in \Cref{subsec:allard_IOF}. The next subsection is dedicated to proving a decay property of the oscillations of $\supp M$, which allows us to prove an improvement of flatness (see \Cref{thm:IOF_allard} below).

\subsection{Decay of oscillations}\label{subsec:allard_oscillation}
In the present subsection, we assume the following:
\begin{assumption}\label{ass:subsec_allard_harnack}
$M\in\Mc_m^\infty(B_R)$ is such that:
\begin{enumerate}
	\item \label{ass_allard_dens_below} $\Theta^m(M,x)\ge1$ for $M$-almost every $x \in B_R$;
	\item \label{ass:allard_dens_above}for every $r\in[0,R]$, $M(B_r)\le\frac{3}{2}\omega_mr^m$.
	\item \label{} $\Lambda := \|H_M\|_{L^\infty(M)}<\infty$.
\end{enumerate}
\end{assumption}
%Before proceeding, let us recall the following classical result:
%\begin{lemma}\label{lemma:allard_mono_basic}
%	If \Cref{ass:subsec_allard_harnack} holds true, then there is $C$ universal such that $M(B_r)\le Cr^m$ for every $0<r\le R$. Moreover, if $0\in\supp M$, then
%	\begin{equation*}
%		\lim_{r\searrow 0}\frac{M(B_r)}{\omega_mr^m}\ge1.
%	\end{equation*}
%\end{lemma}
%\begin{proof}
%	The proof can be found, for example, in \cite[Chapter 4]{simonGMT}.
%\end{proof}

\begin{proposition}[Weighted monotonicity formula]\label{prop:weight_mono_allard}
{Let $f:B_R\to\RR$ be a non-negative, convex function such that $\|\nabla f\|_{\infty}\le 1$}. Provided $0\in \supp M$, then for every $0<r\le R$
\begin{equation*}
	\frac{1}{\omega_mr^m}\int_{B_r}f\dif M
	\ge f(0)- C_0 \Lambda\left(\|f\|_{L^\infty(M)}+r\right)r
\end{equation*} 
for some $C_0$ universal.
\end{proposition}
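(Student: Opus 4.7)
The plan is to test the first variation of $M$ with the $f$-weighted radial field
$$F(x) = \phi_r(|x|)\,x\,f(x), \qquad \phi_r(s) := \phi(s/r),$$
for a smooth non-increasing cutoff $\phi:[0,\infty)\to[0,1]$ equal to $1$ near $0$ and vanishing on $[1,\infty)$, after approximating $f$ by a smooth convex function with $\|\nabla f\|_\infty\le 1$ (standard mollification preserves convexity and the Lipschitz bound). Applying $\dive_{T_xM}(g\,x) = m g + x^T\cdot\nabla g$ with $g = f\phi_r$, the identity $\phi'_r(|x|)|x|=-r\,\tfrac{d}{dr}\phi_r(|x|)$, and the decomposition $|x^T|^2=|x|^2-|x^N|^2$, the first-variation identity \eqref{eq:first_var} rearranges into
\begin{equation*}
\frac{d}{dr}\!\left[\frac{F_\phi(r)}{r^m}\right]=\frac{1}{r^{m+1}}\!\left[\int\phi_r(x^T\!\cdot\!\nabla f)\dif M +\int f(-\phi'_r(|x|))\frac{|x^N|^2}{|x|}\dif M+\int H\cdot x f\phi_r\dif M\right],
\end{equation*}
where $F_\phi(r):=\int\phi_r f\dif M$. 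The middle summand is non-negative (since $\phi'_r\le 0$ and $f\ge 0$) and is dropped; the mean-curvature summand is bounded, via $|H|\le\Lambda$, $f\le\|f\|_{L^\infty(M)}$, $|x|\le r$, and the upper density bound of \Cref{ass:subsec_allard_harnack}\ref{ass:allard_dens_above}, by a universal multiple of $\Lambda\|f\|_{L^\infty(M)}$.

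For the tangential-gradient summand, convexity of $f$ and the subgradient inequality yield $x\cdot\nabla f(x)\ge f(x)-f(0)$, while $\|\nabla f\|_\infty\le 1$ gives $|x^N\cdot\nabla f|\le |x^N|$, so
$$x^T\cdot\nabla f = x\cdot\nabla f - x^N\cdot\nabla f \ge (f-f(0)) - |x^N|.$$
Collecting terms, this yields a differential inequality of the form
$$\frac{d}{dr}\!\left[\frac{F_\phi(r)}{r^m}\right] \ge \frac{F_\phi(r)-f(0)G_\phi(r)}{r^{m+1}} - (\text{error}),$$
with $G_\phi(r):=\int\phi_r\dif M$ and the error collecting the contributions from $\int\phi_r|x^N|\dif M /r^{m+1}$ and $H$.

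The delicate point is that the crude estimate $|x^N|\le|x|\le r$ produces an $O(1)$ error independent of $\Lambda$, which is incompatible with the stated $\Lambda$-factor. To recover the correct scaling I would absorb $\int\phi_r|x^N|\dif M$ into the discarded non-negative $|x^N|^2/|x|$-excess via a Cauchy--Schwarz estimate against $\int\phi_r|x|\dif M$, using additionally the subharmonicity identity
$$\int\phi_r\,\Delta_{T_xM}f\dif M = -\int\nabla^{T_xM}\phi_r\cdot\nabla f\dif M - \int H\cdot\phi_r\nabla f\dif M \ge 0,$$
(valid because $\Delta_{T_xM} f\ge 0$ for convex $f$), which converts the remaining weighted $|x^N|^2$-excess into a $\Lambda$-controlled quantity. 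This absorption is where I expect the main technical obstacle.

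Once the differential inequality is set up, I would integrate from a small $\rho>0$ up to $r$ and let $\rho\to 0$. The left endpoint tends to $f(0)\,\omega_m\,\Theta^m(M,0)$ by continuity of the convex function $f$ at the interior point $0$, and by \Cref{ass:subsec_allard_harnack}\ref{ass_allard_dens_below} this is at least $f(0)\,\omega_m$. Finally, letting $\phi\nearrow\mathbf{1}_{[0,1]}$ replaces $F_\phi(r)/r^m$ with $\int_{B_r}f\dif M/r^m$, producing the inequality with error $C_0\Lambda(\|f\|_{L^\infty(M)}+r)r$.
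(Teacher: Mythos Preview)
Your overall plan---test the first variation, use convexity, derive a differential inequality for a weighted mass ratio, integrate---matches the paper, but the specific test vector field you chose leads to a genuine obstruction that your proposed fix does not resolve.

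The problem is exactly the term you flag. With $F=\phi_r f\,x$ you pick up $\phi_r\,x^T\!\cdot\!\nabla f$ in the divergence, and after your splitting $x^T\!\cdot\!\nabla f\ge (f-f(0))-|x^N|$ you are left with $-\tfrac{1}{r^{m+1}}\int\phi_r|x^N|\dif M$. This quantity is generically of order $1$ (not $\Lambda$): for a stationary varifold that is not a cone centered at $0$, one has $\int_{B_r}|x^N|\dif M\sim r^{m+1}$, so even for $\Lambda=0$ the error does not vanish and the claimed inequality $I(r)\ge f(0)$ cannot be recovered along this route. Your proposed absorption does not work as stated: the positive tilt term you dropped carries the weight $f(-\phi'_r)/|x|$, which is supported on the annulus $\{\phi_r\text{ nonconstant}\}$ and carries a factor of $f$ that may vanish, whereas the bad term carries $\phi_r$, supported on the full ball. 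No Cauchy--Schwarz pairing between these two matches, and the subharmonicity identity you wrote only yields a bound on $\int(-\phi'_r)\tfrac{x^T\cdot\nabla f}{|x|}\dif M$, again living on the annulus and not controlling $\int\phi_r|x^N|\dif M$.

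The paper avoids this cross term entirely by choosing a \emph{symmetric} test field: it sets $g_{r,s}=r^{2-m}h(\cdot/r)-s^{2-m}h(\cdot/s)$ with $h$ a $C^{1,1}$ truncated fundamental solution, and tests with $F=f\nabla g_{r,s}-g_{r,s}\nabla f$. Because $S$ is a symmetric projection, $\nabla_Sf\cdot\nabla g_{r,s}=\nabla_Sg_{r,s}\cdot\nabla f$, so
\[
\dive_S(f\nabla g_{r,s}-g_{r,s}\nabla f)=f\,\dive_S\nabla g_{r,s}-g_{r,s}\,\dive_S\nabla f,
\]
with \emph{no first-order cross terms at all}. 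Convexity of $f$ makes the second summand non-positive (since $g_{r,s}\ge 0$), and the explicit Hessian of $g_{r,s}$ gives $\dive_S\nabla g_{r,s}\le -\tfrac{\chi_{B_r}}{\omega_m r^m}+\tfrac{\chi_{B_s}}{\omega_m s^m}$, so the whole divergence is bounded above by $-\tfrac{f\chi_{B_r}}{\omega_m r^m}+\tfrac{f\chi_{B_s}}{\omega_m s^m}$. The mean-curvature side is then estimated by $\Lambda(\|f\|_\infty\sup|\nabla g_{r,s}|+\sup g_{r,s})$ and, letting $s\searrow r$, one arrives at the clean ODE $I'(r)\ge -C\Lambda(I(r)+r)$, which integrates to the claim. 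The ``Wronskian'' structure $f\nabla g - g\nabla f$ is the missing idea in your approach.
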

\begin{remark}
	Although this fact will not be used in the following, we point out that the above result holds true provided $f$ is $m$-convex, meaning that the sum of the $m$ smallest eigenvalues of $D^2f$ is non-negative.
\end{remark}
\begin{proof}
	We begin with some preliminary computations. Assume\footnote{In the case $m=2$, a logarithmic term replaces $|x|^{2-m}$.} {$m>2$} and let
	\begin{equation*}
		h(x)=\frac{1}{\omega_m m(m-2)}\begin{cases}
			\frac{m}{2}-\frac{m-2}{2}|x|^2\qquad&\mbox{if }|x|\le 1\\
			|x|^{2-m}&\mbox{if }|x|\ge1\, .
		\end{cases}
	\end{equation*}
{Note that $h \in C^{1,1}(\mathbb{R}^d)\cap C^2(\mathbb{R}^d\setminus \partial B_1)$ and
\begin{equation}\label{eq:nabla-h}
 \nabla h(x) = -\frac{1}{m\omega_m} 
 \begin{cases}
 x \qquad &\text{if}\ |x|< 1\\
 \frac{x}{|x|^m} &\text{if}\ |x|\ge1
\end{cases}
\end{equation}
as well as 
\begin{equation}\label{eq:nabla2-h}
 D^2 h(x) = -\frac{1}{\omega_m} 
 \begin{cases}
 \frac{1}{m} \Id \qquad &\text{if}\ |x|< 1\\
 |x|^{-m}\Big(\frac{1}{m}\Id -\frac{x\otimes x}{|x|^2}\Big) &\text{if}\ |x|>1\, .
\end{cases}
\end{equation}}
	For any $0<r\le s< R$, we then let 
	\begin{gather*}
		g_{r,s}(x)=r^{2-m}h(x/r)-s^{2-m}h(x/s).		
	\end{gather*}
	 Note that $g_{r,s} \geq 0$ and $g_{r,s}\equiv0$ outside $B_s$. Straightforward computations give, using \eqref{eq:nabla-h} and \eqref{eq:nabla2-h}, for any $S\in\Gr(m,d)$:
	\begin{equation*}
		\dive_S\nabla g_{r,s}(x) = -\frac{\chi_{B_r}}{\omega_mr^m}+\frac{\chi_{B_s}}{\omega_ms^m} + \frac{\chi_{B_s\setminus B_r}}{\omega_m|x|^m}\left(\frac{|Sx|^2}{|x|^2}-1\right)\le -\frac{\chi_{B_r}}{\omega_mr^m}+\frac{\chi_{B_s}}{\omega_ms^m}.
	\end{equation*}
	Moreover,  since $f$ is non-negative and {convex}
	\begin{equation}\label{computation_dive_fg}
		\begin{split}
			\dive_S(f\nabla g_{r,s} - g_{r,s}\nabla f)
			&= f\dive_S(\nabla g_{r,s}) - g_{r,s}\dive_S(\nabla f)\\
			&\le f\dive_S(\nabla g_{r,s})\\
			&\le -\frac{f\chi_{B_r}}{\omega_mr^m}+\frac{f\chi_{B_s}}{\omega_ms^m}.
		\end{split}
	\end{equation}
	Let now
	\begin{equation*}
		I(r)=\frac{1}{\omega_m r^m}\int_{B_r}f\dif M.
	\end{equation*}
	By \eqref{eq:first_var} and \eqref{computation_dive_fg}, for any $0<r<s\le R$, it holds
	\begin{align*}
		I(s)-I(r)
		\ge \int\dive_{T_xM}(f\nabla g_{r,s}-g_{r,s}\nabla f)\dif M
		=-\int H\cdot(f\nabla g_{r,s}-g_{r,s}\nabla f)\dif M.
	\end{align*}
	Therefore
	\begin{equation}\label{eq:pre_derivative}
		\frac{I(s)-I(r)}{s-r}\ge -\int f H\cdot\frac{\nabla g_{r,s}}{s-r}\dif M + \int \frac{g_{r,s}}{s-r} H\cdot\nabla f\dif M=:-A_1+A_2.
	\end{equation}
	
 	By \holder's inequality, we may estimate
%	letting  $\eps=\|f\|_{L^\infty(M)}$, we may estimate
	\begin{align*}
		|A_1|\le \Lambda\sup\left|\frac{\nabla g_{r,s}}{s-r}\right|\int_{B_s}f\dif M\le C\Lambda \sup\left|\frac{\nabla g_{r,s}}{s-r}\right| s^m I(s)
	\end{align*}
	and
	\begin{align*}
		|A_2|&\le\Lambda \sup \left|\frac{g_{r,s}}{s-r}\right|\|\nabla f\|_{\infty} M(B_s) \le C\Lambda \sup \left|\frac{g_{r,s}}{s-r}\right| s^{m}
	\end{align*}
	where in the second inequality we have used the fact that $\|\nabla f\|_\infty\le1$ and \Cref{ass:allard_dens_above} in \Cref{ass:subsec_allard_harnack} above.
	Direct computations give, using \eqref{eq:nabla-h} and \eqref{eq:nabla2-h} together with the convexity of $t\mapsto t^{-m}$, 
	\begin{gather*}
		{\limsup_{s\searrow r}}\sup_{x\in B_R}\left|\frac{\nabla g_{r,s}}{s-r}\right|\le Cr^{-m},\qquad
		{\limsup_{s\searrow r}}\sup_{x\in B_R}\left|\frac{g_{r,s}}{s-r}\right|\le Cr^{1-m}
	\end{gather*}
	for some $C$ depending only on $m$. Thus, letting $s\searrow r$ in \eqref{eq:pre_derivative}, {together with standard approximation arguments (see for example \cite[\S 17]{simonGMT})}, gives
	\begin{equation*}
		I'(r)\ge-C\Lambda (I(r)+r)
	\end{equation*}
	in the sense of distributions.
	We now multiply both sides of the latter inequality by the integrating factor $F(r)=e^{C\Lambda r}$
	and integrate from some $h>0$ to $r$ to obtain
	\begin{align*}
		I(r)
		\ge e^{C\Lambda(h-r)}I(h)-C\Lambda\int_{h}^re^{C\Lambda(t-r)}t\dif t
		\ge I(h) - C\Lambda r\|f\|_{L^\infty(M)}-C\Lambda r^2,
	\end{align*}
	where in the second inequality we have used the facts that $e^t\ge1+t$ and that, by \Cref{ass:allard_dens_above} in \Cref{ass:subsec_allard_harnack}, $I(h)\le C\|f\|_{L^\infty(M)}$.
	Finally, if $\Theta^m(M,0)\ge1$, letting $h\searrow0$ in the above inequality gives the desired result. Otherwise, if $0\in\supp M$ then there is a sequence $x_j\to0$ such that $\Theta^m(M,x_j)\ge1$. The result follows by continuity of $f$.
\end{proof}

\begin{remark}
	The role of the {factor  $3/2$ in Assumption \ref{ass:subsec_allard_harnack} (2)}, as will be clear from the proof of \Cref{harnack_allard} below, is to rule out the possibility that $M$ consists of two separated sheets, which would clearly violate the conclusion of a Harnack inequality.
	On the other hand, for what concerns \Cref{prop:weight_mono_allard}, by virtue of the classical monotonicity formula (see, for instance, \cite[\S 17]{simonGMT}), (2) in \Cref{ass:subsec_allard_harnack} may be replaced by the weaker assumptions
	\begin{equation*}
		M(B_R)\le E_0R^m\mbox{ and }\Lambda\le\frac{E_0}{R},
	\end{equation*}
	with the caveat that $C_0$ in the conclusion of \Cref{harnack_allard} depends on $E_0$.
\end{remark}

The above result allows us to prove a \textit{partial Harnack inequality}. 
We refer the reader to \eqref{eq:def_osc} for the definition of $\osc_S(M,B_r(x_0))$. Whenever {$x_0=0$} and the indication of $M$ and of the $m$-plane $S$ is unnecessary, as it is in the next two results, {we write} $\osc(B_r)$ in place of $\osc_S(M,B_r(x_0))$.

\begin{proposition}[Harnack inequality]\label{harnack_allard}
	Let $M$ satisfy \Cref{ass:subsec_allard_harnack} {and let $0 \in \supp M$}.
	There exists a universal constant {$\eta \in (0,1)$} such that, if
	\begin{equation*}
		\osc(B_R)\le\eta R\qquad\mbox{ and }\qquad\Lambda\le \frac{\osc(B_R)}{R^2},
	\end{equation*}
	then
	\begin{equation*}
		\osc(B_{\eta R})\le (1-\eta)\osc(B_R).
	\end{equation*}
\end{proposition}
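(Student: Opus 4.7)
The plan is to argue by contradiction using the weighted monotonicity formula (\Cref{prop:weight_mono_allard}) applied to a pair of complementary convex functions. Write $h := \osc(B_R)$ and suppose that $\osc(B_{\eta R}) > (1-\eta)h$ for a small universal $\eta \in (0,1)$ to be determined. By the definition of $\osc$ as half the diameter of the $S^\perp$-projection of $\supp M$, there exist $y_1, y_2 \in \supp M \cap B_{\eta R}$ with $|S^\perp(y_1 - y_2)| > 2(1-\eta)h$; set $e := S^\perp(y_1 - y_2)/|S^\perp(y_1 - y_2)|$, a unit vector in $S^\perp$. Since the width of $\supp M \cap B_R$ in direction $e$ is bounded by the diameter $2h$ of its $S^\perp$-projection, after a translation in direction $e$ the linear function $\ell(x) := e\cdot x$ satisfies $\ell \in [-h, h]$ on $\supp M \cap B_R$, and consequently $\ell(y_1) \ge (1-2\eta)h$ while $\ell(y_2) \le -(1-2\eta)h$.

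Next I would introduce the convex, non-negative, $1$-Lipschitz functions $f := (h-\ell)_+$ and $g := (h+\ell)_+$ on $B_R$. On $\supp M \cap B_R$ both are bounded above by $2h$ and satisfy $f+g=2h$; moreover $f(y_2), g(y_1) \ge (2-2\eta)h$. Apply \Cref{prop:weight_mono_allard} (translated so that $y_2$, respectively $y_1$, is the origin; the upper density bound in \Cref{ass:subsec_allard_harnack} transfers to the translated centers via $M(B_s(y_i)) \le M(B_{s+\eta R}) \le \tfrac{3}{2}\omega_m(s+\eta R)^m$, losing only the factor $(1+\eta/\eta_1)^m$) at the radius $r := \eta_1 R$, where $\eta_1 \in (0,1)$ is another universal constant to be fixed. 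Using $\Lambda \le h/R^2$, $h \le \eta R \le \eta_1 R$, and $\|f\|_{L^\infty(M)}, \|g\|_{L^\infty(M)} \le 2h$, the error term in the monotonicity inequality is bounded by $C\eta_1^2 h$ in each instance, and summing the two contributions yields
\begin{equation*}
 \frac{1}{\omega_m r^m}\left(\int_{B_r(y_2)} f \dif M + \int_{B_r(y_1)} g \dif M\right) \ge f(y_2) + g(y_1) - C\eta_1^2 h \ge (4-4\eta-C\eta_1^2)h.
\end{equation*}

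For the reverse bound I would split each integral over the intersection $B_r(y_1) \cap B_r(y_2)$ and its complement, using $f+g = 2h$ and $f, g \le 2h$ on $\supp M \cap B_R$: the same sum is then bounded above by $2h \cdot M(B_r(y_1) \cup B_r(y_2))/(\omega_m r^m)$, and the inclusion $B_r(y_1) \cup B_r(y_2) \subset B_{r+\eta R}$ combined with the upper density bound produces $3h(1+\eta/\eta_1)^m$. Together, the two inequalities force $3(1+\eta/\eta_1)^m \ge 4 - 4\eta - C\eta_1^2$, which is violated once $\eta_1$ is fixed universal small (so $C\eta_1^2 < 1/4$) and then $\eta$ is taken universal small with $\eta/\eta_1$ small depending on $m$ (so both $4\eta$ and $3[(1+\eta/\eta_1)^m - 1]$ are smaller than $1/4$); this contradicts the assumption and yields the conclusion. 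The main obstacle is purely bookkeeping: verifying that the weighted monotonicity formula may be applied at centers $y_i \in B_{\eta R}$ different from the origin, transferring the density upper bound at the cost of the benign factor $(1+\eta/\eta_1)^m$, and balancing $\eta_1, \eta$ so that the monotonicity error $O(\eta_1^2 h)$ and the density expansion factor together fit inside the gap between $3$ and $4$ in the final inequality.
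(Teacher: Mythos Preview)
Your proof is correct and follows essentially the same strategy as the paper's: argue by contradiction, pick two points $y_1,y_2$ in the small ball that are far apart in the $S^\perp$-direction, apply the weighted monotonicity formula (\Cref{prop:weight_mono_allard}) at each point to a suitable convex $1$-Lipschitz function, sum, and contradict the $\tfrac{3}{2}$ density upper bound. The only difference is cosmetic: the paper uses the pair $f_i=(\pm\,\omega\cdot(x-y_0)-\tfrac{\eps R}{2})^+$, which have \emph{disjoint supports} and sup-norm $\le \tfrac{\eps R}{2}$, so the upper bound for $I_1+I_2$ is immediate as $\tfrac{\eps R}{2}\cdot\tfrac{M(B_{\theta R}(y_1)\cup B_{\theta R}(y_2))}{\omega_m(\theta R)^m}$, producing the numerical gap $1$ versus $\tfrac{3}{4}$; you instead take $f=(h-\ell)_+$, $g=(h+\ell)_+$, which overlap but satisfy $f+g\equiv 2h$ on $\supp M$, and your splitting over $B_r(y_1)\cap B_r(y_2)$ and its complement gives the equivalent gap $4$ versus $3$. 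Both the paper and you tacitly use classical monotonicity to transfer the upper density bound from the origin to the centres $y_i$ at all scales below $\eta_1 R$ (your inclusion $B_s(y_i)\subset B_{s+\eta R}$ alone would not control $M(B_s(y_i))/s^m$ as $s\to 0$), but since $\Lambda R\le\eta$ this costs only a factor $e^{C\eta}(1-\eta)^{-m}$, which is harmless.
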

\begin{proof}
	Let $\eps=\frac{1}{R}\osc(B_R)$ and $\Sigma=\supp M$.  Since $0 \in \supp M$, we can choose $y_0 \in \overline B_{\eps R}$ such that $\Sigma\cap B_R\subset\{y\colon |S^\perp (y-y_0)|\le \eps R\}$. Assume, by contradiction, that there are points $y_1, y_2 \in \Sigma\cap B_{\eta R}$ such that $|S^\perp (y_1-y_2)|> 2 (1-\eta)\eps R$. Denote $\omega:= \frac{S^\perp (y_1-y_2)}{|S^\perp (y_1-y_2)|}$ and consider 
	\begin{equation*}
		f_1(x) = \left((x-y_0)\cdot \omega -\frac{\eps R}{2}\right)^+\quad \text{and} \quad {f_2(x) = \left(-(x-y_0) \cdot \omega-\frac{\eps R}{2} \right)^+}
	\end{equation*}
	and let
	\begin{equation*}
		I_i(r) = \frac{1}{\omega_mr^m}\int_{B_r(y_i)}f_i\dif  M.
	\end{equation*}
	Notice that, by assumption, {$\|f_i\|_{L^\infty( M\cap B_R)}\le\frac{\eps}{2}R$} and $\Lambda\le \eps R^{-1}$. Moreover,  $f_i(y_i)\ge\left(\frac{1}{2}-2\eta\right)\eps R$. Therefore, by \Cref{prop:weight_mono_allard}, for every $\theta \in [\eta,1/2]$ (to be determined later) it holds
	\begin{equation}
		\begin{split}
		I_i(\theta R)
		&\ge f_i(y_i) - C_0\Lambda \theta R\big(\|f_i\|_{L^\infty(M)}+\theta R\big)\\
		&\ge \Big(\frac{1}{2}-2\eta\Big)\eps R - C_0\eps\theta \Big(\frac{\eps}{2}+\theta\Big)R	.	
		\end{split}\label{eq:only_one}
	\end{equation}
Since, by assumption, $\eps\le \eta \le \theta \le \tfrac{1}{2}$ we have
{$ C_0\eps\theta \Big(\frac{\eps}{2}+\theta\Big) \le 2 C_0 \theta^2 \eps \leq C_0\theta \eps$.}
We thus obtain from \eqref{eq:only_one} 
\begin{equation}\label{eq:only_two}
 I_i(\theta R) \geq \Big(\frac{1}{2}-2\eta - C_0\theta\Big)\eps R \geq \Big(\frac{1}{2}-(C_0+2)\theta\Big)\eps R.
\end{equation}
	 Note that $\supp f_1\cap\supp f_2=\emptyset$. Using this fact and the assumptions $M(B_r)\le\frac{3}{2}\omega_mr^m$ and $|y_1|,|y_2|\le2\eps R\le2\eta R$, we have
	 \begin{align*}
		I_1(\theta R)+I_2(\theta R)& \le \frac{1}{\omega_m(\theta R)^m}\int_{B_{\theta R}(y_1)\cup B_{\theta R}(y_2)} f_1 +f_2\, dM\\
		&\le\frac{\eps R}{2}\frac{M(B_{\theta R+2\eta R})}{\omega_m(\theta R)^m}\\
		&\le \frac{3}{4}\Big(1+\frac{2\eta}{\theta}\Big)^m\eps R \leq \frac{3}{4}\Big(1+C_1\frac{\eta}{\theta}\Big)\eps R
	\end{align*}
	for some $C_1$ depending only on $m$. We now specify our choices of $\theta$ and $\eta$. We first choose $\theta$ much smaller than $\min\{C_1^{-1}, (C_0+2)^{-1}\}$ such that
	$$ 2\Big(\frac{1}{2}-(C_0+2)\theta\Big) > \frac{3}{4}\Big(1+C_1\theta\Big)\, .$$
	Fixing $\eta \leq \theta^2$ we obtain a contradiction from \eqref{eq:only_two} and summing for $i\in\{1,2\}$.
	\end{proof}
\bigskip
As a corollary, we have the following
\begin{proposition}\label{prop:allard_decay_final}
	There are positive universal constants $\beta$ and $C$ with the following property. Let $M$ satisfy \Cref{ass:subsec_allard_harnack} {and $0\in \supp M$}. Then, for every $r\in [\osc(B_R)+\Lambda R^2, R]$, it holds
	\begin{equation*}
		\osc(B_r)\le C (\osc(B_R)+\Lambda R^2) \left(\frac{r}{R}\right)^\beta.
	\end{equation*}
\end{proposition}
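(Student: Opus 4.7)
The plan is to iterate Proposition~\ref{harnack_allard} (the partial Harnack inequality) on a geometric sequence of scales. After scaling to $R = 1$, set $\phi_0 := \osc(B_1) + \Lambda$, let $\eta$ be the universal constant from Proposition~\ref{harnack_allard}, and define the Hölder exponent $\beta := \log(1-\eta)/\log\eta \in (0,1)$, chosen so that $(1-\eta) = \eta^\beta$ links the Harnack decay factor with the geometric scaling factor. The claim reduces to $\osc(B_r) \leq C \phi_0 r^\beta$ for $r \in [\phi_0, 1]$. If $\phi_0 \geq \eta$, monotonicity of $\osc$ in the radius combined with $r \geq \phi_0 \geq \eta$ yields the bound trivially with $C = \eta^{-\beta}$; so I restrict to the substantive regime $\phi_0 < \eta$.

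I would then inductively apply Proposition~\ref{harnack_allard} at the scales $r_k := \eta^k$. As long as both the tilt condition $\osc(B_{r_k}) \leq \eta r_k$ and the curvature-dominance condition $\Lambda r_k^2 \leq \osc(B_{r_k})$ hold, Proposition~\ref{harnack_allard} gives $\osc(B_{r_{k+1}}) \leq (1-\eta)\osc(B_{r_k})$, and a straightforward induction produces $\osc(B_{r_k}) \leq (1-\eta)^k \osc(B_1) = r_k^\beta \osc(B_1) \leq r_k^\beta \phi_0$. Interpolating by the monotonicity of $\osc$ between consecutive discrete scales $r_{k+1} \leq r \leq r_k$ costs only a factor $\eta^{-\beta}$, producing the pointwise bound $\osc(B_r) \leq \eta^{-\beta} r^\beta \phi_0$ at every $r$ reached by the iteration.

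Let $k_\ast$ be the first step at which the iteration fails. If the tilt condition fails, then $\osc(B_{r_{k_\ast}}) > \eta r_{k_\ast}$ combined with $\osc(B_{r_{k_\ast}}) \leq \phi_0$ forces $r_{k_\ast} \leq \phi_0/\eta$, and monotonicity together with the iteration bound covers the short remaining interval $[\phi_0, r_{k_\ast}]$ with the same constant. If instead the curvature condition fails, then $\osc(B_{r_{k_\ast}}) < \Lambda r_{k_\ast}^2$; at any $r \leq r_{k_\ast}$ where $\osc(B_r) \leq \Lambda r^2$ the direct estimate $\osc(B_r) \leq \Lambda r^2 \leq \phi_0 r^2 \leq \phi_0 r^\beta$ (valid since $r \leq 1$ and $\beta \leq 2$) already gives the conclusion, while at scales where $\osc(B_r) > \Lambda r^2$ one simply restarts the Harnack iteration. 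The main delicacy, as usual in such iteration arguments, is to choreograph the two Harnack hypotheses across the dyadic scales and to verify that the stopping scale is always either smaller than $\phi_0$, covered by monotonicity, or bypassed by the direct curvature bound $\Lambda r^2 \leq \phi_0 r^\beta$; choosing $C := \eta^{-\beta}$ then handles all cases uniformly.
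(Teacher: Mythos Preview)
Your overall strategy---iterate the Harnack inequality on the geometric scales $r_k=\eta^k$, and interpolate---is the same as the paper's. The difference is in how the curvature hypothesis $\Lambda r^2\le\osc(B_r)$ is handled. The paper avoids your case analysis entirely by working with the single quantity
\[
F(r)=\osc(B_r)+K\Lambda r^{2}
\]
for a fixed large $K$, and proving the clean one-step decay $F(\eta r)\le(1-\eta)F(r)$ whenever the tilt condition $\osc(B_r)\le\eta r$ holds; both of your sub-cases (Harnack applicable / curvature dominates) are absorbed into this single inequality. The tilt condition is then maintained via the crude bound $\osc(B_{\eta^k})\le F(\eta^k)\le F(1)$ as long as $\eta^k\ge F(1)$, which is exactly the range of scales in the statement.

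Your alternative---stop the iteration when curvature dominates, use the direct bound $\osc(B_r)\le\Lambda r^2\le\phi_0 r^\beta$, and ``restart'' Harnack when curvature ceases to dominate---is correct in principle but your write-up leaves the restart step genuinely underspecified. Two points need to be made explicit: (i) after a restart at some scale $r_j$, the tilt hypothesis $\osc(B_{r_j})\le\eta r_j$ must be re-verified (it follows from the crude monotone bound $\osc(B_{r_j})\le\phi_0$ provided $r_j\ge\phi_0/\eta$, but you should say so); and (ii) restarts can occur arbitrarily many times, so you need to check that the constant does not accumulate. The right way to organise this is: for each $k$, let $k_0<k$ be the last index with $\osc(B_{r_{k_0}})\le\Lambda r_{k_0}^2$ (or $k_0=-1$ if none); then $a_{k_0}\le\phi_0 r_{k_0}^\beta$ directly, and Harnack from $k_0+1$ to $k-1$ gives $a_k\le(1-\eta)^{k-k_0-1}a_{k_0}\le\eta^{-\beta}\phi_0 r_k^\beta$, with the single lost factor $\eta^{-\beta}$ coming only from the one-step monotone bound $a_{k_0+1}\le a_{k_0}$. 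This is the missing sentence in your argument; once inserted, your proof is complete (with final constant $C=\eta^{-2\beta}$ after interpolation, not $\eta^{-\beta}$). The paper's $F(r)$ device is simply a tidier packaging of the same idea.
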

\begin{proof}
	By rescaling, assume $R=1$.	For some $K$ large to be determined later, let
	\begin{equation*}
		F(r) = \osc(B_r)+K \Lambda r^{2}.
	\end{equation*}
	We claim that, if $\eta$ is the constant determined in \Cref{harnack_allard}, then for any $r$ such that $\osc(B_r)\le\eta r$ it holds
	\begin{equation}\label{eq:decay_step1}
		F(\eta r)\le (1-\eta) F(r).
	\end{equation}
	Indeed, if $\Lambda\le \osc(B_r) r^{-2}$, then \Cref{harnack_allard} yields $\osc(B_{\eta r})\le (1-\eta)\osc(B_r)$, hence
	\begin{align*}
		F(\eta r) \le (1-\eta)\osc(B_r) + \eta^{2}K\Lambda r^{2}\le (1-\eta)F(r)
	\end{align*}
	since $\eta$ is much smaller than $1$. Otherwise, if $\osc(B_r)\le \Lambda r^2$, then
	\begin{align*}
		F(\eta r) &\le \osc(B_r) + K\Lambda \eta^{2}r^{2}\\
		&\le K\Lambda r^{2}\left(\frac{1}{K} + \eta^{2}\right)\\
		&\le (1-\eta)F(r)
	\end{align*}
	as desired, provided $K\ge\frac{1}{1-\eta-\eta^{2}}$.
	
	By induction, if  $\eta^{k}\ge F(1)$, then
	\begin{equation*}
		\osc(B_{\eta^k})\le F(\eta^k)\le (1-\eta)^k F(1).
	\end{equation*}
	Now, for {$F(1)\le r< 1$, let $k\in\NN \cup \{0\}$} be such that $\eta^{k+1}\le r<\eta^k$. Then we have
	\begin{equation*}
		{\osc(B_r)\le \osc(B_{\eta^k})\le (1-\eta)^kF(1) \leq \frac{1}{1-\eta}\eta^{\beta(k+1)}F(1)\le Cr^\beta F(1)}
	\end{equation*}
	provided $\beta$ is chosen small so that {$\eta^\beta\ge(1-\eta)$}
	and $C\ge\frac{1}{1-\eta}$.
\end{proof}

\subsection{Allard's theorem}\label{subsec:allard_IOF}
We now prove the following

\begin{theorem}[Improvement of flatness]\label{thm:IOF_allard}
	For every $\alpha\in(0,1)$, there are {positive} constants $c,\eps_0,\eta$ and $C$ with the following property.
	Let $M\in\Mc_m^\infty(B_R(x_0))$ be such that
	\begin{itemize}
		\item $x_0\in\supp M$;
		\item $\Theta^m(M,x)\ge1$ for $M$-almost every $x$;
		\item for every $x\in B_{R/2}(x_0)$ and $0<r\le R/2$ it holds $M(B_r(x))\le \frac{3}{2}\omega_mr^m$;
	\end{itemize}
	and, for some $\eps\le\eps_0$ and $S\in\Gr(m,d)$,
	\begin{gather*}
			\osc_S(M,B_R(x_0))\le\eps R\qquad\mbox{ and }\qquad
			\|H_M\|_{L^\infty(M)}\le c \eps R^{-1}.
	\end{gather*}
	Then there is $T\in\Gr(m,d)$ with $|S-T|\le C\eps$ such that
	\begin{equation}\label{allard_iof_concl}
		\osc_T(M,B_{\eta R}(x_0))\le \eta^{1+\alpha}\eps R.
	\end{equation}
\end{theorem}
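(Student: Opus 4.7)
The plan is a Savin-style compactness and contradiction argument, powered by the Hölder oscillation decay from \Cref{prop:allard_decay_final}. Fix $\alpha\in(0,1)$, pick an auxiliary $\alpha'\in(\alpha,1)$, and choose $\eta$ small enough that the standard $C^{1,\alpha'}$ estimate for harmonic functions at the origin yields $C_d\eta^{1+\alpha'}\le\tfrac{1}{2}\eta^{1+\alpha}$. The constants $c$, $\eps_0$ and $C$ will be determined a posteriori. Suppose the statement fails for this $\alpha$: then there exist $\eps_k\to 0$, $c_k\to 0$, admissible $M_k\in\Mc_m^\infty(B_{R_k}(x_k))$, and $S_k\in\Gr(m,d)$ with $\osc_{S_k}(M_k,B_{R_k}(x_k))\le\eps_k R_k$ and $\|H_{M_k}\|_\infty\le c_k\eps_k/R_k$, yet no $m$-plane $T$ with $|S_k-T|\le k\eps_k$ satisfies \eqref{allard_iof_concl}. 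After a rigid motion and rescaling I take $x_k=0$, $R_k=1$, $S_k=S:=\RR^m\times\{0\}$, and (after a normal translation) $\supp M_k\cap B_1\subset\{|x''|\le\eps_k\}$.

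\textbf{Hölder equicontinuity and compactness.} Applying \Cref{prop:allard_decay_final} at every $y\in\supp M_k\cap\overline{B_{3/4}}$ (its hypotheses hold on $B_{1/4}(y)$ by the classical monotonicity formula and smallness of $\Lambda_k$) gives
\[
\osc_S(M_k, B_r(y))\le C(\eps_k+\Lambda_k)r^\beta\le C'\eps_k r^\beta\qquad\text{for }r\in[\eps_k,1/4].
\]
This has two consequences for the rescaled sheets $\Gamma_k:=\{(x',\eps_k^{-1}x''):(x',x'')\in\supp M_k\cap\overline{B_{3/4}}\}$: they are equi-Hölder in the first $m$ coordinates, and any two support points over the same base $x'$ differ by at most $(C'\eps_k)^{1/(1-\beta)}$, which is $o(\eps_k)$ in the rescaled picture. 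At the same time, density $\Theta^m(M_k,\cdot)\ge 1$ combined with the upper mass bound and monotonicity forces $M_k\to \Hc^m\rest(S\cap \overline{B_{3/4}})$ as varifolds, so $\supp M_k$ converges to $S\cap \overline{B_{3/4}}$ in Hausdorff distance. Arzela-Ascoli then extracts a subsequence along which $\Gamma_k$ converges in Hausdorff distance to the graph of a Hölder function $u:\overline{B_{1/2}^m}\to\RR^{d-m}$ with $u(0)=0$ and $|u|\le 1$.

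\textbf{Harmonicity of the limit.} I verify that $u$ solves $\Delta u=0$ componentwise in the viscosity sense. Fix a direction $\omega\in S^\perp$ and let $\phi\in C^2$ touch $x'\mapsto (\omega\cdot u(x'))$ strictly from above at $\bar x'\in B_{1/2}^m$. Define the ambient barrier $\Psi_k(x):=\omega\cdot x-\eps_k\phi(x')-\delta_k$, with $\delta_k$ chosen as the largest constant for which $\Psi_k\ge 0$ on $\supp M_k\cap\overline{B_{1/2}}$; uniform convergence $\Gamma_k\to\graph u$ forces $\delta_k=o(\eps_k)$ and the contact point $x_k\in\supp M_k\cap\{\Psi_k=0\}$ to converge to $(\bar x',u(\bar x'))$. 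The strong maximum principle for varifolds recorded in the appendix, applied at $x_k$ to the smooth hypersurface $\{\Psi_k=0\}$, yields the mean curvature inequality that Taylor-expands to $\eps_k\Delta\phi(\bar x')+O(\eps_k^2\|D^2\phi\|^2)\ge -c_k\eps_k$. Dividing by $\eps_k$ and sending $k\to\infty$ gives $\Delta\phi(\bar x')\ge 0$; the symmetric touching from below handles the other inequality, and taking $\omega=e_{m+i}$ for each $i$ shows $u$ is componentwise harmonic.

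\textbf{Closing the contradiction.} The harmonic $C^{1,\alpha'}$ estimate at the origin gives a linear map $L:\RR^m\to\RR^{d-m}$ with $|L|\le C_d$ and $\sup_{B_\eta^m}|u(x')-Lx'|\le C_d\eta^{1+\alpha'}\le\tfrac{1}{2}\eta^{1+\alpha}$. Let $T_k\in\Gr(m,d)$ be the graph of $x'\mapsto\eps_k Lx'$; then $|S-T_k|\le C_d\eps_k< k\eps_k$ for $k$ large, and uniform convergence of $\Gamma_k$ yields
\[
\osc_{T_k}(M_k,B_\eta)\le\Big(\sup_{B_\eta^m}|u(x')-Lx'|+o_k(1)\Big)\eps_k\le\eta^{1+\alpha}\eps_k
\]
for $k$ large, contradicting the choice of the sequence. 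The main obstacle is the harmonicity step: encoding a viscosity touching of a vector-valued limit as an ambient hyperplane barrier touching $\supp M_k$, invoking the varifold maximum principle, and extracting the first-order-in-$\eps_k$ content of the mean curvature inequality. A secondary technical issue is ruling out multiple sheets to get a single-valued limit, handled above by combining Hölder decay with the global flatness bound.
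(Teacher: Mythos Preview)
Your compactness-and-contradiction outline matches the paper's proof: rescale, use \Cref{prop:allard_decay_final} to show the blow-ups $\Gamma_k$ Hausdorff-converge to the graph of a single-valued H\"older function $u$ (nonemptiness from varifold convergence to the plane, single-valuedness from the oscillation decay exactly as you describe), prove $u$ harmonic, and close with interior estimates for the Laplace equation. These structural steps are essentially identical to the paper's.

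The gap is in the harmonicity step. Your barrier $\Psi_k(x)=\omega\cdot x-\eps_k\phi(x')-\delta_k$ has $D^2\Psi_k=(-\eps_k D^2\phi)\oplus 0_{S^\perp}$, and \Cref{prop:max_principle} only yields $\trace_m D^2\Psi_k(x_k)\le\Lambda_k|\nabla\Psi_k(x_k)|$, where $\trace_m$ is the sum of the $m$ smallest eigenvalues of the full $d\times d$ matrix---not $\dive_S\nabla\Psi_k=-\eps_k\Delta\phi$. If, say, $D^2\phi(\bar x')$ has one large positive eigenvalue with the rest negative and $\Delta\phi<0$, then $-\eps_k D^2\phi\oplus 0$ has a negative eigenvalue and $\trace_m D^2\Psi_k\le 0$; the inequality from \Cref{prop:max_principle} is then vacuous and no contradiction results. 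In codimension $d-m\ge m$ the argument \emph{never} produces $\Delta\phi\ge 0$. There is no a~priori control forcing the tangent planes of $M_k$ near $x_k$ to be close to $S$, so you cannot trade $\trace_m$ for $\dive_S$. The paper's \Cref{lemma:uisharmonic} handles exactly this: it abandons the componentwise test, compares $u$ with its harmonic extension $h$ on $\partial B^S_{1/4}$, and uses $G_j(x)=\tfrac12\big|\eps_j^{-1}S^\perp x - h(Sx)\big|^2+\tfrac{\delta}{2}|Sx|^2$, which is quadratic in the \emph{full} normal variable. The rank-one term $\nabla f_j(\nabla f_j)^\transp$ in $D^2G_j$ (with $f_j=\eps_j^{-1}S^\perp x-h(Sx)$) has size $\eps_j^{-2}$ along any direction with nontrivial $S^\perp$-component, and a two-case argument on $|T-S|$ then shows $\dive_T\nabla G_j(x_j)\ge\delta$ for \emph{every} $T\in\Gr(m,d)$; this is what makes $\trace_m D^2G_j>\Lambda_j|\nabla G_j|$ and delivers the contradiction with \Cref{prop:max_principle}. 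Your linear-in-$\omega$ barrier lacks this mechanism.
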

Before proceeding with the proof, we introduce the following notation. To every $M\in\Mc_m(U)$ we associate a rectifiable varifold $V_M$ such that $\int\phi(x,S)\dif V_M(x,S) = \int\phi(x,T_xM)\dif M(x)$ for every $\phi\in C^0_c(U\times\Gr(m,d))$. If $M\in\Mc^p_m(U)$ and $H_M=0$, we say that $V_M$ is \textit{stationary}.

\begin{proof}
	By rescaling and translating, we assume $R=1$ and $x_0=0$.
	We argue by contradiction and compactness. Assume there are sequences $\eps_j\searrow 0, c_j\searrow0$ and $\{M^j\}\subset\Mc^\infty_m(B_1)$ such that the assumptions of the theorem are satisfied with $\eps_0$ replaced by $\eps_j$ and $c$ replaced by $c_j$, for which however \eqref{allard_iof_concl} fails for any choice of $\eta$ and $T$.
	
	Before proceeding, we remark that, by compactness (see, for instance, \cite[Theorem 42.7]{simonGMT}), there is $M\in\Mc_m(B_1)$ such that  $0\in\supp M$, $\Theta^m(M,x)\ge1$ for $M$-a.e.~$x$, $M(B_1\setminus S)=0$, $\Var{M}$ is stationary and $\Var{M^j}\weakly\Var{M}$. Therefore $M=\Theta^m(M,x)\Hc^m\rest S$ and $S\ni x\mapsto \Theta^m(M,x)$ has vanishing distributional gradient, thus $M =\Theta_0\Hc^m\rest S$ for some $\Theta_0\ge1$.
	
	Let now $\Sigma^j=\supp M^j$;
	for every $x\in\RR^d$, define $F_j(x)=(Sx,\eps_j^{-1}S^\perp x)\in\RR^d$ and let
	\begin{equation*}
		\tSigma^j = F_j(\Sigma^j)\subset B_1^S\times \overline{B_2^{S^\perp}}.
	\end{equation*}
	Notice that $\tSigma^j$ are relatively closed subsets of $B_1^S\times \overline{B_2^{S^\perp}}$ and that they are non-empty, since by assumption $0\in\tSigma^j$ for every $j$.
	Therefore there is a relatively closed set $\tSigma$ in $B_1^S\times \overline{B_2^{S^\perp}}$ such that, up to subsequences, $\tSigma^j$ converges to $\tSigma$ in the Hausdorff distance.
	
	\newcommand{\ub}{\mathbf{u}}
	For every $x'\in B_1^S$, we let
	\begin{equation*}
		\ub(x')=\{y\in S^\perp\colon (x',y)\in \tSigma\}.
	\end{equation*}
	First, we show that $\ub(x')\neq\emptyset$ for every $x'\in B_1^S$. Indeed, otherwise, there would be $r>0$ such that for every $j$ large enough 
	\begin{equation*}
		\tSigma^j\cap (B^S_r(x')\times \overline{B_2^{S^\perp}}) =\emptyset,
	\end{equation*}
	hence, by compactness and lower semicontinuity of the mass,
	\begin{equation*}
		0=\liminf_jM^j(B^S_r(x')\times \overline{B_2^{S^\perp}})\ge M(B^S_r(x')\times \overline{B_2^{S^\perp}})\ge\omega_mr^m,
	\end{equation*}
	which is false.
	
	Secondly, we prove that $\ub(x')$ is a singleton for every $x'\in B_{1/2}^S$. Indeed, by \Cref{prop:allard_decay_final}, for every $j$, every $x\in\Sigma^j\cap B_{1/2}$ and every $r\in\left(C\eps_j,\frac{1}{2}\right)$, it holds
	\begin{equation*}
		\osc_S(M^j,B_r(x))\le C\eps_j r^\beta.
	\end{equation*}
	Therefore, by Hausdorff convergence, for every $x,y\in\tSigma\cap (B_{1/2}^S\times \overline{B_2^{S^\perp}})$ it holds
	\begin{equation*}
		|S^\perp(x-y)|\le C |S(x-y)|^\beta.
	\end{equation*}
	In particular, $\ub(x')$ is a singleton; for the rest of the proof, we denote by $u(x')$ the only element of $\ub(x')$. {Note further that this implies that $u \in C^{0,\beta}(B^S_{1/2})$.}
	By \Cref{lemma:uisharmonic} below, $u$ is harmonic in $B^S_{1/4}$. In particular, since $\sup_{B^S_{1/4}}|u|\le 2$, classical elliptic estimates yield
	\begin{equation*}
		{\sup_{B^S_{1/8}}\left(|\nabla u|+|D^2u|\right)}\le C
	\end{equation*}
	for some $C$ universal. Since $u(0)=0$, we may choose $\eta$ small  depending only on $C$ {and $\alpha$} so that
	\begin{equation*}
		{|u(x')-u(0)-\nabla u(0) x'|\le C|x'|^2\le \frac{ \eta^{1+\alpha}}{2}}
	\end{equation*}
	for every $x'\in B_{2\eta}^S$. However, this implies that, for every $j$ large enough and every $x\in\Sigma^j\cap B_\eta$, it holds
	\begin{equation*}
		|S^\perp x - \eps_j\nabla u(0)\,(Sx)|\le \eps_j\eta^{1+\alpha},
	\end{equation*}
	which contradicts the assumptions made at the beginning of the proof.
\end{proof}
\begin{lemma}\label{lemma:uisharmonic}
	$u$ defined in the proof of \Cref{thm:IOF_allard} is harmonic.
\end{lemma}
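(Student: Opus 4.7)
The plan is to extract harmonicity of $u$ from a mean-value-type inequality for the scalar function $g := u\cdot\omega$, for each unit $\omega\in S^\perp$, which I obtain by applying the weighted monotonicity formula \Cref{prop:weight_mono_allard} to each $M^j$ of the compactness sequence appearing in the proof of \Cref{thm:IOF_allard} with a rescaled convex test function, and then passing to the limit $j\to\infty$.

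Fix $\omega$, a point $x'_0\in B^S_{1/4}$ and $r>0$ with $\overline{B^S_r(x'_0)}\subset B^S_{1/4}$. Choose $x_0^j\in\Sigma^j$ such that $Sx_0^j\to x'_0$ and $\eps_j^{-1}S^\perp x_0^j\to u(x'_0)$; this is possible by the Hausdorff convergence of $\tSigma^j$ to the graph of $u$ established in the proof of \Cref{thm:IOF_allard}. For any non-negative convex $\hat h\colon\RR\to\RR$ with $|\hat h'|\le 1$, set
\begin{equation*}
	f_j(x) := \eps_j\,\hat h\bigl(\eps_j^{-1}(x-x_0^j)\cdot\omega\bigr),
\end{equation*}
which is non-negative and convex on $B_r(x_0^j)$ with $\|\nabla f_j\|_\infty\le 1$. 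Because $\omega\in S^\perp$ and the oscillation hypothesis of \Cref{thm:IOF_allard} gives $|S^\perp(x-x_0^j)|\le 4\eps_j$ on $\Sigma^j\cap B_r(x_0^j)$, one has $\|f_j\|_{L^\infty(M^j)}=O(\eps_j)$; combined with $\|H_{M^j}\|_{L^\infty}/\eps_j\le c_j\to 0$, the error term in \Cref{prop:weight_mono_allard} (applied at $x_0^j$ and divided by $\eps_j$) is $O(c_j)$ and vanishes in the limit. The integrand rewrites on $\Sigma^j$ as $\hat h\bigl((\eps_j^{-1}S^\perp x-\eps_j^{-1}S^\perp x_0^j)\cdot\omega\bigr)$, which by the Hausdorff convergence converges uniformly on $\Sigma^j\cap B_r(x_0^j)$ to $\hat h\bigl((u(Sx)-u(x'_0))\cdot\omega\bigr)$; combined with the weak convergence $\Var{M^j}\weakly\Var{M}=\Theta_0\Hc^m\rest S$ noted in the proof of \Cref{thm:IOF_allard}, passing to the limit yields
\begin{equation*}
	\Theta_0\cdot\frac{1}{\omega_m r^m}\int_{B^S_r(x'_0)}\hat h\bigl((u(x')-u(x'_0))\cdot\omega\bigr)\,dx'\;\ge\;\hat h(0).
\end{equation*}

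Specializing to $\hat h(t)=(t+C)^+$ with $C$ large enough that the positive part is superfluous on $B^S_r(x'_0)$, the inequality rearranges to $\Theta_0\bigl(\frac{1}{\omega_m r^m}\int_{B^S_r(x'_0)} g\,dx' - g(x'_0)\bigr)\ge(1-\Theta_0)C$; in the case $\Theta_0=1$ this is precisely the sub-mean-value inequality $\frac{1}{\omega_m r^m}\int_{B^S_r(x'_0)} g\,dx'\ge g(x'_0)$. Running the same argument with $-\omega$ in place of $\omega$ gives the reverse inequality, so $g$ satisfies the mean value property on $B^S_{1/4}$ and is therefore harmonic; since $\omega\in S^\perp$ is arbitrary, $u$ is harmonic. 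The main obstacle is justifying $\Theta_0=1$: the bounds of \Cref{ass:subsec_allard_harnack} only give $\Theta_0\in[1,3/2]$, so one must combine the constancy of $\Theta^m(M,\cdot)$ on $S$ (already observed in the proof of \Cref{thm:IOF_allard}) with a tight density estimate at the base point $0\in\Sigma$ coming from a refined use of the weighted monotonicity, or alternatively from the a priori closeness to the unit disk built into the normalization of the compactness sequence.
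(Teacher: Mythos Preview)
Your approach is genuinely different from the paper's: the paper argues by contradiction via a viscosity-type maximum principle (comparing $u$ with its harmonic replacement $h$ and touching $\Sigma^j$ from outside with a paraboloid built from $h$, then invoking \Cref{prop:max_principle}), whereas you try to extract a mean-value identity for $u\cdot\omega$ directly from \Cref{prop:weight_mono_allard}. The mean-value idea is appealing, but as you yourself flag, it collapses unless $\Theta_0=1$, and that is a genuine gap you have not closed.

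The difficulty is structural, not cosmetic. In the setting of \Cref{thm:IOF_allard} the only mass control is $M^j(B_r(x))\le\frac{3}{2}\omega_m r^m$, and the lower density hypothesis is $\Theta^m(M^j,\cdot)\ge1$ with no integrality assumption. Hence the limit $M=\Theta_0\,\Hc^m\rest S$ can have any $\Theta_0\in[1,\tfrac{3}{2}]$; for instance $M^j=(1+\delta)\Hc^m\rest S$ with $0<\delta\le\tfrac{1}{2}$ already satisfies every hypothesis. Your suggestion to import ``closeness to the unit disk'' does not help here: that bound lives in \Cref{thm:allard_main}, not in \Cref{thm:IOF_allard}, and the lemma is stated for the $u$ produced inside the proof of the latter. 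Even if one tightened the density hypothesis in the improvement-of-flatness to $M(B_r(x))\le(1+\delta)\omega_m r^m$, you would only get $\Theta_0\in[1,1+\delta]$, still not equality. When $\Theta_0>1$, your key inequality becomes
\[
\Theta_0\Big(\tfrac{1}{\omega_m r^m}\!\int_{B^S_r(x'_0)} g\,dx' - g(x'_0)\Big)\ \ge\ (1-\Theta_0)\,C,
\]
whose right-hand side tends to $-\infty$ as $C\to\infty$ and so carries no information; the companion inequality with $-\omega$ suffers the same fate. In short, \Cref{prop:weight_mono_allard} compares $\int f\,dM^j$ to $\omega_m r^m$, not to $M^j(B_r)$, and there is no matching upper bound available to cancel the multiplicity.

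This is exactly why the paper's maximum-principle route is the right tool: \Cref{prop:max_principle} is a pointwise statement on $\supp M^j$ and is completely insensitive to the density $\Theta_0$. If you want to rescue the mean-value strategy you would need an additional ingredient---either an integrality assumption (so $\Theta_0\in\NN\cap[1,\tfrac{3}{2}]=\{1\}$) or a two-sided weighted monotonicity, neither of which is available in the present setup.
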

\begin{proof}
	We argue as in \cite[Lemma 2.4]{savin2017viscosity}.
	Let $h:B^S_{1/4}\to S^\perp$ be the harmonic function such that $(h-u)|_{\de B^S_{1/4}}=0$. If $u\neq h$, then there is $0<\delta<1/2$ small such that, for all $j$ large enough, the function
	\begin{equation*}
		G_j(x) = \frac{1}{2}\left|\frac{S^\perp x}{\eps_j}-h(Sx)\right|^2+\frac{\delta}{2}|Sx|^2
	\end{equation*}
	is such that $G_j|_{\Sigma^j}$ achieves its maximum at some point $x_j$ with $|Sx_j|\le\frac{1}{4}-\delta$. 
	We claim that $j$ can be chosen so large that, for every $T\in\Gr(m,d)$, it holds
	\begin{equation}\label{eq:conclharm}
		\dive_T\nabla G_j(x_j)> \|H_{M^j}\|_{L^\infty}|\nabla G_j(x_j)|,
	\end{equation}
	which would contradict \Cref{prop:max_principle}.
	In the rest of the proof, $C$ denotes constants (whose value may change from one expression to the other) which depend only on $d,m$ and $\delta$, but they are independent of $j$.
	We start by noticing that, by standard elliptic estimates, $$\max\{|\nabla h(Sx_j)|,|D^2h(Sx_j)|\}\le C.$$ Therefore $|\nabla G_j(x_j)| \le \frac{C}{\eps_j}$ for $j$ large enough and $\|H_{M^j}\|_\infty|\nabla G_j(x_j)|\le Cc_j\to0$ as $j\to\infty$. Thus, in order to prove \eqref{eq:conclharm}, it is sufficient to prove that {for $j$ sufficiently large
	\begin{equation}\label{eq:conclharm1}
		\inf_{T\in\Gr(m,d)}\dive_T\nabla G_j(x_j)\ge \delta.
	\end{equation}
	}
	Let $f_j(x) = \frac{1}{\eps_j}S^\perp x-h(Sx)$ and let $\Id_S$ denote the orthogonal projection onto $S$. Then 
	\begin{align*}
		D^2G_j(x_j) &= D^2f_j(x_j)\cdot f_j(x_j) + \nabla f_j(x_j)(\nabla f_j(x_j))^\transp + \delta\Id_S \\
		&\ge D^2f_j(x_j)\cdot f_j(x_j) +\delta\Id_S \\
		&=:A_j.
	\end{align*}
	In particular, since $\Delta h=0$,
	\begin{equation*}
		\dive_S\nabla G_j(x_j)\ge \trace_SA_j={\Delta h(Sx_j)}\cdot\left(\frac{S^\perp x_j}{\eps_j}-h(Sx_j)\right)+m\delta= m\delta,
	\end{equation*}
	{where $\trace_SA_j = \sum_{i=1}^mA_j\xi_i\cdot\xi_i$ for any orthonormal basis $\{\xi_i\}_{i=1}^m$ of $S$.}
	Since $|A_j|\le C$, by continuity it holds $\dive_T\nabla G_j(x_j)\ge \delta$ for every $T\in\Gr(m,d)$ such that {$|T-S|\le \gamma$ for $\gamma > 0$ sufficiently small. This proves \eqref{eq:conclharm1} in the case $|T-S|\le \gamma$.}
	
	On the other hand, if {$|T-S|\ge \gamma$}, then there is a unit vector $\eta\in T$ such that {$|S^\perp \eta|\ge c_0\gamma$ for some $c_0>0$ depending only on the dimension}. Then
	\begin{equation*}
		\dive_T\nabla G_j(x_j)\ge {\text{trace}_TA_j} + |(\nabla f_j(x_j))^\transp\eta|^2.
	\end{equation*}
	We have $|\text{trace}_TA_j|\le|A_j|\le C$ and
	\begin{equation*}
		\left|(\nabla f_j(x_j))^\transp \eta \right|= \left|-\nabla h(Sx_j) S\eta + \frac{1}{\eps_j} S^\perp \eta\right|\ge \frac{1}{\eps_j}|S^\perp\eta|-|\nabla h(Sx_j)|\ge {\frac{c_0 \gamma}{\eps_j} - C},
	\end{equation*} 
	thus {$\dive_T\nabla G_j(x_j)\ge  (\frac{c_0 \gamma}{\eps_j} - C)^2 - C \ge \delta$} for $j$ large enough. This proves \eqref{eq:conclharm1} in the case {$|T-S|\ge \gamma$}.
%	Notice that
%	\begin{equation*}
%		\dive_S\nabla G(x_1) = |\nabla h(Sx_1)|^2+\Delta h(Sx_1)\left(\frac{S^\perp x_1}{\eps_j}-h(Sx_1)\right)+m\delta\ge m\delta
%	\end{equation*}
%	(since $\Delta h=0$) and that $|D^2H(x)|\le\frac{C}{\eps_j^2}$ for $j$ large enough.
%	Thus, if $|T-S|\le c_0\eps_j^2$ and $c_0$ is small enough, we have
%	\begin{equation*}
%		\dive_T\nabla H(x_1)\ge \dive_S\nabla H(x_1) - \frac{C}{\eps_j^2}c_0\eps_j^2\ge\delta, 
%	\end{equation*}
%	proving \eqref{eq:conclharm1} in this case.
%	For the other case, we let $f(x) = \frac{S^\perp x}{\eps_j}-h(Sx)$ and compute
%%	On the other hand, if $|T-S|\ge c_0\eps_j^2$, then there is $\eta\in T$ with $|\eta|=1$ and $|S^\perp \eta|\ge c_0'\eps_j^2$. 
%
%	\begin{equation*}
%		D^2H(x_1) = D^2f(x_1)\cdot f(x_1) + \nabla f(x_1)(\nabla f(x_1))^T + \delta\Id_S,
%	\end{equation*}
%	where $\Id_S$ is the projection onto $S$.
%	Assume $\{\eta_i\}_{i=1}^m$ is an orthonormal basis of $T$. Since $|T-S|\ge c_0\eps_j^2$, one of the $\eta_i$ (say $\eta_1$) satisfies $|S^\perp \eta|\ge c_0'\eps_j^2$. Thus, since $\left|D^2f(x_1)f(x_1)+\delta\Id_S\right|\le C$, we have
%	\begin{equation*}
%		\dive_T\nabla H(x_1)\ge -C + |(\nabla f(x_1))^T\eta_1|^2.
%	\end{equation*}
%	We have
%	\begin{equation*}
%		\left|(\nabla f(x_1))^T \eta_1 \right|= \left|\nabla h(Sx_1) S\eta_1 + \frac{1}{\eps_j} S\eta_1\right|\ge \frac{1}{\eps_j}|S\eta_1|-|\nabla h(Sx_1)|\ge c_0'\eps_j-C
%	\end{equation*}
\end{proof}
\bigskip

Notice that the conclusion of \Cref{thm:IOF_allard} may be iterated at all scales. In particular, if we let
%\begin{equation*}
%	\flat(M,B_r(x)) = \inf_{S\in\Gr(m,d)}\frac{\osc_S(M,B_r(x))}{r}
%\end{equation*}
%and
\begin{equation*}
	E(M,B_r(x)) = \inf_{S\in\Gr(m,d)}\frac{\osc_S(M,B_r(x))}{r} + C\|H\|_{L^\infty(M\rest B_r)}r
\end{equation*}
for some $C$ large, then \Cref{thm:IOF_allard} yields the existence of some universal constants $\eta$ and $\eps_0$ such that
\begin{equation*}
	E(M,B_{\eta R})\le \eta^{\alpha} E(M,B_R)
\end{equation*}
provided $E(M,B_R)\le\eps_0$.
A straight-forward induction argument yields, for every $r>0$,
\begin{equation}\label{eq:excess_decay}
	\inf_{S\in\Gr(m,d)}\frac{\osc_S(M,B_r)}{r}\le E(M,B_r)\le C\left(\frac{r}{R}\right)^{\alpha}E(M,B_R)
\end{equation}
for some $C$ universal, provided $E(M,B_R)\le\eps_0$.

\bigskip
We finally prove \Cref{thm:allard_main}.
\begin{proof}[Proof of \Cref{thm:allard_main}]
	\textbf{Step 1. }We claim that, if $\delta_0$ is small enough, then there exists $S\in\Gr(m,d)$ such that the assumptions of \Cref{thm:IOF_allard} are in place for $R=\frac{1}{4}$ and any $x_0\in \supp M\cap B_{3/4}$. We argue by compactness and contradiction: consider sequences $\delta_j\searrow0$ and $\{M^j\}\subset \Mc_m^\infty(B_1)$ that satisfy the assumptions of \Cref{thm:allard_main} with $\delta_0$ replaced by $\delta_j$.
	
	We first remark that, up to subsequences, there exists $M\in\Mc_m(B_1)$ such that $\Theta^m(M,x)\ge1$ $M$-almost everywhere, $V_M$ is stationary and $M(B_1)\le\omega_m$. By \cite[Theorem 5.3]{allard1972}, we have that $M=\Hc^m\rest S$ for some $S\in\Gr(m,d)$.
	
	Given $\alpha\in(0,1)$, let now $\eps_0$ be the constant given in \Cref{thm:IOF_allard}. Then, for $j$ large enough,
	\begin{equation*}
		\supp M^j\subset\{y\colon |S^\perp y|\le \eps_0/2\}.
	\end{equation*}
	The only thing left to prove is that
	\begin{equation}\label{eq:small_1/2}
		M^j(B_{1/4}(x))\le\left(1+\frac{1}{4}\right)\frac{\omega_m}{4^m}
	\end{equation}
	for every $j$ large and every $x\in\supp M^j\cap B_{3/4}$. Given the above inequality, by monotonicity (\cite[\S 17]{simonGMT}), provided $\delta_0$ is smaller than some universal constant, we obtain $M^j(B_{r}(x))\le\frac{3}{2}\omega_mr^m$
%	\begin{equation*}
%		M^j(B_{r}(x))\le\frac{3}{2}\omega_mr^m
%	\end{equation*}
	for every $0\le r\le \frac{1}{4}$, as required in \Cref{thm:IOF_allard}.
	
	We now prove \eqref{eq:small_1/2}. If the result is false, then there exist a subsequence $j_k$ and points $x_k\in\supp M^{j_k}\cap B_{3/4}$ such that
	\begin{equation}\label{eq:contr0}
		M^{j_k}(B_{1/4}(x_k))\ge\left(1+\frac{1}{4}\right)\frac{\omega_m}{4^m}.
	\end{equation}
	Up to extracting a further subsequence, $x_k\to x\in \overline{B_{3/4}}$ and, by monotonicity, $x\in\supp M$. Therefore, for any $\eps>0$,
	\begin{equation*}
		\omega_m\left(\frac{1}{4}+\eps\right)^m=M\bigg({B_{\frac{1}{4}+\eps}(x)}\bigg)\ge\limsup_kM^{j_k}\bigg({B_{\frac{1}{4}+\eps}(x)}\bigg)\ge\limsup_kM^{j_k}(B_{1/4}(x_k))
	\end{equation*}
	contradicting \eqref{eq:contr0} and thus proving the claim.
	
	\textbf{Step 2. }We now prove that $\supp M\cap B_{3/4}$ is the graph of some function $u\colon S(\supp M)\to \RR^{d-m}$. For the rest of the proof, we let
	\begin{equation*}
		\eps:=\osc_S(M,B_1)+||H_M||_{L^\infty(M)},
	\end{equation*}
	where $S\in\Gr(m,d)$ was determined in the previous step, and we set $\Sigma:=\supp M$. By iterating \Cref{thm:IOF_allard}, for every $x\in\Sigma\cap B_{3/4}$ we find $T_x\in\Gr(m,d)$ such that 
	\begin{equation}\label{eq:planes_close}
		|T_x-S|\le C\eps
	\end{equation}
	and, for every $r\le\frac{1}{4}$,
	\begin{equation*}
		\osc_{T_x}(M,B_r(x))\le C\eps r^{1+\alpha}.
	\end{equation*}
	It then follows that, for any two $x,y\in\Sigma\cap B_{3/4}$, it holds
	\begin{equation*}
		|T_x^\perp(x-y)|\le C\eps|S(x-y)|^{1+\alpha}.
	\end{equation*}
	together with \eqref{eq:planes_close}, the above inequality shows at once that there is $u:S(\Sigma\cap B_{3/4})\to \RR^{d-m}$ such that
	\begin{equation*}
		\Sigma\cap B_{3/4} = \{x\in\RR^{d}\colon S^\perp x = u(Sx)\}
	\end{equation*}
	and that, for every $x'\in S(\Sigma)$, there is a linear function $L_{x'}:S\to S^\perp$ such that, for every $y'\in S(\Sigma)$, it holds
	\begin{equation}\label{eq:almost_holder}
		|u(y')-u(x')-L_x(y'-x')|\le C\eps|x'-y'|^{1+\alpha}.
	\end{equation}

\textbf{Step 3. }We conclude the proof by showing that $S(\Sigma)\supset B_{1/2}^m$. Once that is proved, \eqref{eq:almost_holder} gives $||u||_{C^{1,\alpha}}\le C\eps$, as desired. We argue by contradiction: if $B_{1/2}^m\setminus S(\Sigma)\neq\emptyset$, since $S(\Sigma)$ is relatively closed in $B_{1/2}^m$ and $0\in\supp M$ by assumption, there must be a ball $B^m_r(x')\subset B_{1/2}^m\setminus S(\Sigma)$ and a point $y'\in\partial B^m_r(x')\cap S(\Sigma)$. If $\eps$ is smaller than some universal constant, then $y=(y',u(y'))\in\Sigma\cap B_{3/4}$. Consider a blow-up sequence
\begin{equation*}
	M^i = \frac{M-y}{r_i}
\end{equation*}
with $r_i\searrow 0$. Up to subsequences, $M_i$ converges weakly to some $M^\infty\in\Mc_m^\infty(B_1)$ with $V_{M^\infty}$ stationary. Since $\Sigma\cap (B_r^m(x')\times S^\perp)=\emptyset$, \eqref{eq:almost_holder} yields that $\supp M^\infty$ is included in a $m$-dimensional half-plane. This contradicts the Constancy Theorem (see, for instance, \cite[Theorem 41.1]{simonGMT}), concluding the proof.
\end{proof}

\section{Brakke's regularity theorem}\label{sec:brakke}
In this section, we show how the arguments in the previous section may be adapted to the case of mean curvature flows.
{Inspired by \cite{kasai_tonegawa}, we give the following definition.}

\begin{definition}[Brakke flow with transport term]\label{def:BF}
	We say that a family $\Mb=\{M_t\}_{t\in [0,\Omega]}$ of Radon measures on $U\subset\RR^d$  is a \textit{Brakke flow with transport term $v$ in $U\times[0,\Omega]$} if the following hold true:
	\begin{enumerate}
		\item \label{item:Mt_is_rect} for almost every $t\in [0,\Omega]$, $M_t\in \Mc^2_m(U)$ {and, for $M_t$-almost every $x$, $\Theta^m(M_t,x)$ is a positive integer.}
		\item For every $W\compact U$, 
		\begin{equation*}
			\int_0^\Omega{\int_W}\left(1+|v|^2+|H_{M_t}|^2\right)\dif M_t\dif t<\infty.
		\end{equation*}
		\item For every non-negative test function $\phin\in C^1_c(U\times [0,\Omega])$ and for every $[t_1,t_2]\subset [0,\Omega]$, it holds
		\begin{align}\label{eq:def_brakke}
			M_{t_2}(\phin(\cdot,t_2))-M_{t_1}(\phin(\cdot,t_1))\le\int_{t_1}^{t_2}\int\left(\de_t\phin + (-\phin H+\nabla\phin)\cdot(H+v^\perp)   \right)\dif M_t\dif t,
		\end{align}
		where $H$ is the generalized mean curvature vector of $M_t$, {as defined in \eqref{eq:first_var},} and $v^\perp(x) = (T_xM_t)^\perp v(x)$ for $M_t$-almost every $x$.
	\end{enumerate}
\end{definition}
For a Brakke flow with transport as above, we define the measure $M$ on $U\times[0,\Omega]$ by $\int\phi(x,t)\dif M(x,t) = \int\int\phi(x,t)\dif M_t(x)\dif t$
%\begin{equation*}
%	\int\phi(x,t)\dif M(x,t) = \int\int\phi(x,t)\dif M_t(x)\dif t
%\end{equation*}
and the space-time track of $\Mb$:
\begin{equation*}
	\Sigma_{\Mb} = \Clos\Bigg(\bigcup_{t\in [0,\Omega]}\supp M_t\times\{t\}\Bigg).
\end{equation*}

\begin{remark}
	By virtue of \cite[Chapter 5]{brakke}, it holds
	\begin{equation}\label{eq:ortho}
		H_{M_t}(x)\perp T_xM_t
	\end{equation}
	at $M$-almost every $(x,t)$. This fact is used in \Cref{prop:mono_brakke} below.	 
\end{remark}

\subsection{Decay of oscillations}
In the following, we denote by $Q_R(x_0,t_0)$ the {backwards} parabolic cylinder $B_R(x_0)\times[t_0-R^2,t_0]$.
The following assumptions will be used in the present subsection.
\begin{assumption}\label{ass:brakke_oscil_interno}
	\hfill
	\begin{enumerate}[(1)]
		\item $\Mb$ is a Brakke flow with transport term $v$ in $Q_R$ and $(0,0)\in\Sigma_\Mb$;
		\item \label{assumption:MDR} there is $E_1<\infty$ such that, for every $(x,t)\in Q_R$ and $B_r(x)\subset B_R$, it holds $M_t(B_r(x))\le E_1r^m$;
%		\begin{equation*}
%			M_t(B_r(x))\le E_1r^m;
%		\end{equation*}
		\item \label{ass:norm_v} $\Lambda:=\|v\|_{L^{\infty}(M)}<\infty$.
%		\begin{equation*}
%			\Lambda:=\|v\|_{L^{\infty}(M)}\le 1;
%		\end{equation*}
	\end{enumerate}
\end{assumption}

Before proceeding, we give the following definition:
\begin{definition}[$m$-dimensional backward heat kernel]
	Let $\phin\in C^\infty_c([0,1))$ be a cut-off function that we fix hereafter such that $\phin\equiv1$ in $[0,1/2]$, $|\phin'|\le3$ and $0\le\phin\le1$ everywhere. For $R>0$, we let $\hk_R:\RR^{d}\times(-\infty,0)\to\RR$ be defined as
	\begin{equation*}\label{eq:hk_def}
		\hk_R(x,t) = \frac{1}{(4\pi(-t))^{m/2}}\exp\bigg(-\frac{|x|^2}{4(-t)}\bigg)\phin\left(\frac{|x|}{R}\right).
	\end{equation*}
\end{definition}
{A direct calculation yields that there is $C$ universal such that, for every $S\in\Gr(m,d)$ and for every $(x,t)\in\RR^d\times(-\infty,0)$, it holds}
\begin{equation}\label{eq:resto0}
	\left|\de_t\hk_R+\dive_S\nabla\hk_R + \frac{|S^\perp\nabla\hk_R|^2}{\hk_R}   \right|\le C\frac{\chi_{B_R}}{R^{m+2}}.
\end{equation}
%We proceed to recall some useful facts.
%\begin{lemma}\label{lemma:propagation0}
%	There is $E_2<\infty$ depending on $E_1$ such that, if $\Mb$ is a Brakke flow with transport term $u$ that satisfies \Cref{ass:brakke0}, then for every $t\in[-R^2,0)$ it holds
%	\begin{equation}\label{eq:uniform_bound_gauss_density}
%		\int\hk_R(\cdot,t)\dif M_t\le E_2.
%	\end{equation}
%	Moreover, for every $(x,t)\in B_R\times(-R^2,0]$ the limit
%	\begin{equation*}
%		\Theta_{gauss}(\Mb, X):=\lim_{s\nearrow t}\hk_R(y-x,s-t)
%	\end{equation*}
%	exists and, if $(0,0)\in\Sigma_\Mb$, then $\Theta_{gauss}(\Mb,(0,0))\ge1$.	
%\end{lemma}
%\begin{proof}
%	For \eqref{eq:uniform_bound_gauss_density} and the existence of the limit, see \cite[Proposition 6.2]{kasai_tonegawa}. The fact that $\Theta_{gauss}\ge1$ on $\Sigma$ is given by the upper semicontinuity of $\Theta_{gauss}$ and by the fact that $\Theta_{gauss}((x,t))\ge1$ whenever $M_s\in\Mc_m(B_R)$ and $M_s$ has an approximate tangent space at $x$.
%\end{proof}

%For the next result, assume $f:B_R\to\RR$ is a non-negative, convex function, and that $\|\nabla f\|_\infty\le1$. Moreover, let
%\begin{equation*}
%	\eps:=\|f\|_{L^\infty(M)}.
%\end{equation*}

\begin{proposition}[Weighted Huisken's monotonicity formula]\label{prop:mono_brakke}
	For every $E_1$, there is $C>0$ with the following property. Let $\Mb$ and $v$ satisfy \Cref{ass:brakke_oscil_interno}, and let $f:B_R\to\RR$ be a non-negative, {convex} function {with $\|\nabla f\|_{\infty}\le 1$}. Then, for every $(x_0,t_0)\in\Sigma_\Mb$, every $r>0$ such that $Q_r(x_0,t_0)\subset Q_R$ and every $-r^2\le t<0$, it holds
	\begin{align}\label{eq:brakke_mono_conclusion}
		\int f\hk_r(\cdot-x_0,t)\dif M_{t_0+t}\ge f(x_0)-C\left(\Lambda+\frac{\eps}{r^2}+\eps\Lambda^2\right)(-t),
	\end{align}
	where $\eps:=\|f\|_{L^\infty(M)}$.
\end{proposition}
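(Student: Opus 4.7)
The plan is to mirror the strategy of \Cref{prop:weight_mono_allard}, now using the Brakke inequality \eqref{eq:def_brakke} with the non-negative test function $\phi(x,t) = f(x)\hk_r(x-x_0,\,t-t_0)$. Setting
\[
 I(t) := \int f\,\hk_r(\cdot-x_0,\,t-t_0)\dif M_t,
\]
the goal is to establish the one-sided differential inequality $\tfrac{d}{dt}I(t) \le C(\Lambda + \eps/r^2 + \eps\Lambda^2)$. After translating so that $(x_0,t_0)=(0,0)$, integrating this bound from $t\in[-r^2,0)$ up to $\tau\nearrow 0$ gives
\[
 I(t) \ge \liminf_{\tau\to 0^-}I(\tau) - C(-t)(\Lambda+\eps/r^2+\eps\Lambda^2),
\]
and the conclusion follows from $\liminf_{\tau\to 0^-}I(\tau)\ge f(0)$, which is a consequence of the classical Gaussian density lower bound $\ge 1$ at any point of $\Sigma_\Mb$ together with continuity of $f$ and the concentration of $\hk_r(\cdot,\tau)$ at $0$; when $(0,0)$ itself is not a density point, the nearby-point approximation used at the end of \Cref{prop:weight_mono_allard} carries over.

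The algebraic core is the extraction of a Huisken square. Setting $S:=T_xM_t$, the Brakke integrand for $\phi=f\hk_r$ expands as
\[
 f\partial_t\hk_r - f\hk_r|H|^2 + f\nabla\hk_r\cdot H + \hk_r\nabla f\cdot H - f\hk_r H\cdot v + f\nabla\hk_r\cdot v^\perp + \hk_r\nabla f\cdot v^\perp.
\]
I substitute $\partial_t\hk_r = -\dive_S\nabla\hk_r - |S^\perp\nabla\hk_r|^2/\hk_r + R_r$ from \eqref{eq:resto0} (with $|R_r|\le C\chi_{B_r}/r^{m+2}$) and then apply \eqref{eq:first_var} to the vector field $f\nabla\hk_r$, which rewrites $\int (-f\dive_S\nabla\hk_r)\dif M_t$ as $\int fH\cdot\nabla\hk_r\dif M_t + \int\nabla f\cdot(\nabla\hk_r-S^\perp\nabla\hk_r)\dif M_t$. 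Using $H\perp S$, the $H$-quadratic and $H$-linear (with coefficient $\nabla\hk_r$) pieces collapse into the Huisken square
\[
 -f\hk_r|H|^2 + 2fH\cdot S^\perp\nabla\hk_r - f|S^\perp\nabla\hk_r|^2/\hk_r = -f\hk_r\bigl|H-S^\perp\nabla\hk_r/\hk_r\bigr|^2.
\]

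A second application of \eqref{eq:first_var}, this time to $\hk_r\nabla f$, shows that the two remaining $\nabla f$-terms (the tangential piece from above and $\hk_r\nabla f\cdot H$) add up to $-\int\hk_r\dive_S\nabla f\dif M_t\le 0$, where non-positivity comes from convexity of $f$; this is the unique place convexity enters, and the computation is justified for Lipschitz $f$ by mollification (preserving convexity and $\|\nabla f\|_\infty\le 1$). The transport terms $-f\hk_r H\cdot v + f\nabla\hk_r\cdot v^\perp$ rewrite as $-f\hk_r\, v\cdot W$ with $W:=H - S^\perp\nabla\hk_r/\hk_r$ (using $H\cdot v^\perp = H\cdot v$ and $\nabla\hk_r\cdot v^\perp = v\cdot S^\perp\nabla\hk_r$), and combining with $-f\hk_r|W|^2$ through $-f\hk_r(|W|^2+v\cdot W)\le f\hk_r|v|^2/4$ gives at most $\eps\Lambda^2\hk_r/4$. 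The three remaining contributions are bounded by $|\int fR_r\dif M_t|\le C\eps E_1/r^2$, $|\int\hk_r\nabla f\cdot v^\perp\dif M_t|\le C\Lambda E_1$, and the quadratic term $\le C\eps\Lambda^2 E_1$, all using the Gaussian shell bound $\int\hk_r\dif M_t\le C(E_1)$ which follows from Assumption \ref{assumption:MDR} via a standard dyadic decomposition.

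The main obstacle is really a delicate bookkeeping exercise: one must choose the \enquote{right} vector fields ($f\nabla\hk_r$ first, then $\hk_r\nabla f$) in \eqref{eq:first_var} so that (i) the $H$-terms assemble into a Huisken square that can be completed against the transport term $v\cdot W$, and (ii) the $\nabla f$-cross-terms collapse onto $-\int\hk_r\dive_S\nabla f\dif M_t$, which carries the good sign by convexity. Once these two cancellations are engineered, the residual terms are transparently bounded uniformly in $t$, and integration in time immediately produces the claimed linear-in-$(-t)$ deficit.
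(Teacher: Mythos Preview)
Your proposal is correct and follows essentially the same route as the paper: test the Brakke inequality with $f\hk_r$, use \eqref{eq:resto0} for $\partial_t\hk_r$, assemble the Huisken square, absorb the transport term via Young, drop $-\hk_r\dive_S\nabla f\le 0$ by convexity, and control the residuals using $M_t(B_r)\le E_1 r^m$. The only cosmetic differences are that the paper writes $-f\dive_S\nabla\hk_r=\dive_S(\hk_r\nabla f-f\nabla\hk_r)-\hk_r\dive_S\nabla f$ in one step and applies the first variation once to the combined field (you apply it twice, to $f\nabla\hk_r$ and $\hk_r\nabla f$ separately), and the paper keeps $\int f\hk_r|v|^2\dif M_t\le\Lambda^2 I(t)$ and uses an integrating factor, whereas you bound it directly by $C(E_1)\eps\Lambda^2$; both yield the same final estimate.
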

\begin{proof}
	Up to rescaling and translating, we may assume $r=1$ and $(x_0,t_0)=(0,0)$; for brevity, we let $\hhk:=\hk_1$.
	By mollification, we may also assume that $f\in C^2(\overline{B_{1}})$. Notice that, in this case, $\dive_S\nabla f \ge0$ for any $S\in\Gr(m,d)$. By \eqref{eq:resto0} and the above inequality, we have
	\begin{align*}
		\de_t(f\hhk) &\le -f\dive_S\nabla\hhk-f\frac{|S^\perp\nabla\hhk|^2}{\hhk}+Cf\chi_{B_1}\\
		%		& = \dive_S(\hhk\nabla f-\hhk)-\hhk\dive_S\nabla f -f\frac{|S^\perp\nabla\hhk|^2}{\hhk}+Cf\chi_{B_2}\\
		&\le \dive_S(\hhk\nabla f-f\nabla\hhk) -f\frac{|S^\perp\nabla\hhk|^2}{\hhk}+Cf\chi_{B_1}.
	\end{align*}
	In particular, for almost every $t$, it holds
	\begin{align}\label{eq:fpsi_derivative}
		\int\de_t(f\hhk)\dif M_t\le\int\left(-H\cdot(\hhk\nabla f - f\nabla\hhk)-f\frac{|(\nabla\hhk)^\perp|^2}{\hhk}\right)\dif M_t + C E_1\eps,
	\end{align}
	where we have used the facts that $M_t\in\Mc_m^2$ for a.e. $t$ and that $M_t(B_1)\le E_1$.
	
	Let now {$I(s) = \int f\hhk(\cdot,s)\dif M_s$}. We use $\phin = f\hhk$ as a test function in \Cref{def:BF}. {By \eqref{eq:fpsi_derivative}}, we obtain, for $-1\le t\le s<0$:
%	\begin{align*}
%		&\int f \hhk(\cdot,s)\dif M_s-\int f \hhk(\cdot,t)\dif M_t \\
%		&\qquad\le \int_t^s \int \left(-f\frac{|S^\perp\nabla\hhk|^2}{\hhk}-2fH\cdot\nabla\hhk+f\hhk|H|^2\right)\dif M_\tau\dif \tau\\
%		&\qquad\qquad +\int_t^s \int \left( v^\perp\cdot(-f\hhk H + \nabla(f\hhk))\right)\dif M_\tau\dif \tau+CE_1\eps(s-t)\\
%		&\qquad = \int_t^s \int f\hhk\left(-\left|\frac{\nabla^\perp\hhk}{\hhk}-H\right|^2+v^\perp\cdot \left(\frac{\nabla^\perp\hhk}{\hhk}-H\right)\right)\dif M_\tau\dif \tau\\
%		&\qquad\qquad+\int_t^s\int\hhk\nabla f\cdot v^\perp\dif M_\tau\dif \tau + CE_1\eps(s-t).
%	\end{align*}
	\begin{align*}
		I(s)-I(t)
		&\le \int_t^s \int \left(-f\frac{|(\nabla\hhk)^\perp|^2}{\hhk} + 2fH\cdot\nabla\hhk-f\hhk|H|^2\right)\dif M_\tau\dif \tau\\
		&\qquad +\int_t^s \int \left( v^\perp\cdot(-f\hhk H + \nabla(f\hhk))\right)\dif M_\tau\dif \tau+CE_1\eps(s-t)\\
		&= \int_t^s \int f\hhk\left(-\left|\frac{(\nabla\hhk)^\perp}{\hhk}-H\right|^2+v^\perp\cdot \left(\frac{(\nabla\hhk)^\perp}{\hhk}-H\right)\right)\dif M_\tau\dif \tau\\
		&\qquad+\int_t^s\int\hhk\nabla f\cdot v^\perp\dif M_\tau\dif \tau + CE_1\eps(s-t),
	\end{align*} 
	where \eqref{eq:ortho} was used in the above equality.
	We then use Young's inequality to bound $v^\perp\cdot\left(\frac{(\nabla\hhk)^\perp}{\hhk}-H\right)\le |v|^2+\left|\frac{(\nabla\hhk)^\perp}{\hhk}-H\right|^2$ and obtain
	\begin{align}
		I(s)-I(t)
		\le\int_t^s\int \left(f\hhk|v|^2 + \hhk|\nabla f||v|\right)\dif M_\tau\dif \tau + CE_1\eps(s-t)\label{eq:difference}.
	\end{align}
	Next, we estimate
	\begin{align*}
		\int f\hhk|v|^2\dif M_\tau\le \Lambda^2\int f\hhk\dif M_\tau\qquad\mbox{and}\qquad \int\hhk|\nabla f||v|\dif M_\tau\le \Lambda\int\hhk\dif M_\tau\le C\Lambda E_1,
	\end{align*}
%	and
%	\begin{equation*}
%		\int\hhk|\nabla f||v|\dif M_\tau\le \Lambda\int\hhk\dif M_\tau\le C\Lambda E_1,
%	\end{equation*}
	where the {latter inequality follows from} \Cref{assumption:MDR} in \Cref{ass:brakke_oscil_interno}.
	Therefore, going back to \eqref{eq:difference}, dividing both sides by $s-t$ and letting $s\searrow t$, we obtain
%	 $I(\tau):=\int f\hhk(\cdot,\tau)\dif M_\tau$, we have
%%	\begin{equation*}
%		I(s)-I(t)\le \Lambda^2\int_t^sI(\tau)\dif\tau + C(s-t)(\Lambda+\eps)
%	\end{equation*}
%	for some constant $C$ (which depends also on $E_1$).
%	
%	This proves at once that
	\begin{equation*}
		I'(t)\le \Lambda^2 I(t) + C(\Lambda+\eps)
	\end{equation*}
	in the sense of distributions, for some $C$ depending also on $E_1$.
	Hence for every $-1\le t\le s<0$ it follows that
	\begin{equation*}
		I(t)\ge I(s) e^{-\Lambda^2(s-t)} - C(\Lambda+\eps)\int_t^se^{\Lambda^2(t-\tau)}\dif\tau\ge I(s) - C\Lambda^2\eps(-t)-C(\Lambda+\eps)(-t)
	\end{equation*}	
	where in the second inequality we used the inequality $e^t\ge1+t$ and the fact that $I(s)\le CE_1\eps$. With the above inequality at hand, it is fairly standard to prove that, for any two sequences $(x_j,t_j)\to0$ and $\tau_j\nearrow0$, it holds
	\begin{equation*}
		\lim_{s\nearrow0} I(s) \ge \limsup_{j\to\infty}\int f(\cdot)\hhk(\cdot-x_j,\tau_j)\dif M_{t_j+\tau_j}.
	\end{equation*}
	By choosing $(x_j,t_j)$ such that $M_{t_j}$ has an approximate tangent plane at $x_j$ and $\tau_j$ converging to $0$ fast enough, we find
	\begin{equation*}
		\limsup_{j\to\infty}\int f(\cdot)\hhk(\cdot-x_j,\tau_j)\dif M_{t_j+\tau_j}\ge f(0),
	\end{equation*}
	as desired.
\end{proof}
\medskip

Similarly to what we did in \Cref{sec:allard}, for $S\in\Gr(m,d)$ and {$X\in \Sigma_\Mb$} we define the quantity
\begin{equation*}
	\osc_S(\Mb, Q_r(X))=\frac{1}{2}\sup\big\{|S^\perp(x-y)|\colon (x,t),(y,s)\in \Sigma_{\Mb}\cap Q_r(X)\big\}.
\end{equation*}
%Whenever the indication of $\Mb$ and of the $m$-plane $S$ is unnecessary, as it is in the next two results, we omit it and write $\osc(Q_r)$ in place of $\osc_S(\Mb,Q_r)$.

\begin{proposition}[Harnack inequality]\label{prop:harnack_brakke}
	For every $E_1$, there exists {$\eta \in (0,1)$} with the following property. Let $\Mb$ and $v$ satisfy \Cref{ass:brakke_oscil_interno} {with $(0,0) \in \Sigma_\Mb$}. Moreover, assume that, for every $t\in[-R^2,0)$,
	\begin{equation*}
		\int\hk_R(\cdot,t)\dif M_t\le\frac{3}{2},
	\end{equation*}
	that $\osc(Q_R)\le \eta R$ and that $\Lambda R^{2}\le \osc(Q_R)$. Then
	\begin{equation*}
		\osc(Q_{\eta R})\le(1-\eta)\osc(Q_R).
	\end{equation*}
\end{proposition}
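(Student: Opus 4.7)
The plan is to follow the structure of \Cref{harnack_allard}, replacing the weighted varifold monotonicity by the weighted Huisken monotonicity (\Cref{prop:mono_brakke}) and the ball mass bound by the Gaussian density hypothesis $\int \hk_R(\cdot,t)\dif M_t\le 3/2$. I would set $\eps = \osc(Q_R)/R$ and pick $y_0\in\overline B_{\eps R}$ such that $\Sigma_\Mb\cap Q_R\subset\{(x,t):|S^\perp(x-y_0)|\le\eps R\}$. Arguing by contradiction, I suppose there are $(y_1,t_1),(y_2,t_2)\in\Sigma_\Mb\cap Q_{\eta R}$ with $|S^\perp(y_1-y_2)|>2(1-\eta)\eps R$. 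Setting $\omega:=S^\perp(y_1-y_2)/|S^\perp(y_1-y_2)|\in S^\perp$, I take the two competitors $f_1(x)=((x-y_0)\cdot\omega-\eps R/2)^+$ and $f_2(x)=(-(x-y_0)\cdot\omega-\eps R/2)^+$; these are non-negative, convex, $1$-Lipschitz, have disjoint supports, and satisfy $\|f_i\|_{L^\infty(M\cap Q_R)}\le\eps R/2$ and $f_i(y_i)\ge(1/2-2\eta)\eps R$ by exactly the same elementary argument as in the proof of \Cref{harnack_allard}.

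For the lower bound, I would apply \Cref{prop:mono_brakke} at $(y_i,t_i)$ with test function $f_i$, radius $r$ a fixed fraction of $R$ (so that $Q_r(y_i,t_i)\subset Q_R$), and backward time $-\tau$ with $\tau$ chosen as a small fraction of $R^2$. Using $\Lambda R^2\le\eps R$, the error term $C(\Lambda+\|f_i\|_\infty/r^2+\|f_i\|_\infty\Lambda^2)\tau$ is controlled by $O(\tau\eps/R)$, yielding
\[
	I_i := \int f_i\,\hk_r(\cdot-y_i,-\tau)\dif M_{t_i-\tau}\ge\big(\tfrac12-2\eta-O(\tau/R^2)\big)\eps R.
\]

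The hard part is the upper bound, since estimating each $I_i$ individually by the Gaussian density at $(y_i,t_i)$ would only give $I_i\le(3/4+O(\eta))\eps R$ and hence the useless sum $\le(3/2+O(\eta))\eps R$. The plan is instead to combine $I_1$ and $I_2$ into a single Gaussian integral centered at $(0,0)$---possibly after a small time shift, which is cheap since $|t_1-t_2|\le\eta^2 R^2$---by exploiting the closeness $|y_i|\le 2\eta R$ via a pointwise Gaussian comparison of the form $\hk_r(x-y_i,-\tau)\le(1+C\eta)\hk_R(x,-\tau')$ on the relevant region. Combined with the disjoint supports, which imply $f_1+f_2\le\eps R/2$ pointwise on $\Sigma_\Mb\cap Q_R$, this produces
\[
	I_1+I_2\le(1+C\eta)\int(f_1+f_2)\hk_R(\cdot,-\tau')\dif M_{-\tau'}\le(1+C\eta)\cdot\tfrac{\eps R}{2}\cdot\tfrac{3}{2}=\tfrac{3}{4}(1+C\eta)\eps R.
\]
Summing the lower bounds gives $I_1+I_2\ge(1-4\eta-O(\tau/R^2))\eps R$, so comparing with the above inequality yields $1-O(\eta)-O(\tau/R^2)\le\tfrac{3}{4}+O(\eta)$, which fails once $\eta$ and $\tau/R^2$ are chosen sufficiently small, delivering the contradiction.
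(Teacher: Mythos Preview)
Your approach matches the paper's, but there is one concrete technical fix. As written, $I_i=\int f_i\,\hk_r(\cdot-y_i,-\tau)\dif M_{t_i-\tau}$ is an integral against $M_{t_i-\tau}$, so $I_1$ and $I_2$ live on \emph{different} time-slices; your ``small time shift, which is cheap'' is not actually cheap for a Brakke flow, since $t\mapsto M_t$ can drop mass discontinuously and there is no pointwise-in-time two-sided control available. The paper sidesteps this by fixing a \emph{common} past time $T=-\theta R^2$ and applying \Cref{prop:mono_brakke} from each base point $(y_i,s_i)$ with backward time $T-s_i$, so that both weighted integrals are taken against the same measure $M_T$ and can be added directly using $\supp f_1\cap\supp f_2=\emptyset$.

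A secondary point: the pointwise kernel comparison in the paper is the \emph{additive} Lipschitz estimate
\[
\bigl|\hk_{R/2}(x-y_i,T-s_i)-\hk_{R/2}(x,T)\bigr|\le L\eta R^{-m},
\]
valid once $T-s_i\le -\tfrac{\theta}{2}R^2$ (i.e.\ $\eta^2\ll\theta$), and the resulting error term is then absorbed using the mass-density ratio hypothesis $M_T(B_R)\le E_1R^m$ from \Cref{ass:brakke_oscil_interno}. Your multiplicative bound $\hk_r(x-y_i,-\tau)\le(1+C\eta)\hk_R(x,-\tau')$ does not hold uniformly in $x$ (the ratio of shifted Gaussians is unbounded in the tail), so the additive route together with the $E_1$ bound is the clean way to close the argument.
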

\begin{proof}
	Let $\eps=\frac{1}{R}\osc(Q_R)$. We  can thus choose $y_0 \in B_{\eps R}$ such that $\Sigma_\Mb\cap B_R\subset\{y\colon |S^\perp (y-y_0)|\le \eps R\}$. Assume by contradiction that there are two points $Y_i=(y_i,s_i)\in\Sigma\cap Q_{\eta R}$, $i=1,2$, such that $|S^\perp (y_1-y_2)|\ge2(1-\eta)\eps R$.	 Denote $\omega:= \frac{S^\perp (y_1-y_2)}{|S^\perp (y_1-y_2)|}$ and consider 
	\begin{equation*}
		f_1(x) = \left((x-y_0)\cdot \omega -\frac{\eps R}{2}\right)^+\quad \text{and} \quad f_2(x) = \left( - (x-y_0) \cdot \omega -\frac{\eps R}{2}\right)^+\, .
	\end{equation*}
	Consider $\eta \leq \theta \leq 1$ and let $T=-\theta R^2$. Since the $f_i$ are convex and $\|\nabla f_i\|_\infty \leq 1$, by \Cref{prop:mono_brakke} 			
	\begin{align*}\label{eq:right-side}
		\int f_i \hk_{R/2}(\cdot-y_i,T-s_i)\dif M_{T}
		&\ge f_i(y_i)
		-C\left(\Lambda+\frac{\|f_i\|_\infty}{R^2}+\|f_i\|_\infty\Lambda^2\right)(s_i-T)\\
		&\ge f_i(y_i)-3C_0\eps\theta R,
	\end{align*}
	where we have used that $\|f_i\|_\infty\le\eps R$ and $\Lambda\le \eps R^{-1}$.
	By choosing $\theta$ small so that $3C_0\theta\le\frac{1}{10}$ and remarking that $f_i(y_i)\ge\left(\frac{1}{2}-2\eta\right)\eps R$, we obtain
	\begin{equation}\label{eq:right_side}
		\int f_i \hk_{R/2}(\cdot-y_i,T-s_i)\dif M_{T}\ge \left(\frac{2}{5}-2\eta\right)\eps R.
	\end{equation}
	
	Next, we bound from above the left-hand side of \eqref{eq:right_side}: provided $\eta^2$ is much smaller than $\theta$ chosen above, it holds $T-s_i\le-\frac{\theta}{2}R^2$, hence there is some constant $L$ such that
	\begin{equation*}
		\left|\hk_{R/2}(x-y_i,T-s_i)-\hk_{R/2}(x,T)\right|\le L\left(R^{-m-1}|y_i|+R^{-m-2}|s_i|\right)\le L \eta R^{-m}
	\end{equation*}
	for every $x\in\RR^d$. Therefore
	\begin{equation}\label{eq:left_side}
		\begin{split}
		\int f_i \hk_{R/2}(\cdot-y_i,T-s_i)\dif M_{T}
		&\le\frac{\eps R}{2}\int_{\supp{f_i}}\hk_{R/2}(\cdot-y_i,T-s_i)\dif M_{T}\\
		&\le\frac{\eps R}{2}\bigg(\int_{\supp{f_i}}\hk_{R/2}(\cdot,T)\dif M_{T} + LE_1\eta\bigg)
		\end{split}
	\end{equation}
	where we have used the fact that $\frac{M_T(B_R)}{R^m}\le E_1$.
	
	If $\eta$ is smaller than some universal constant, then $\supp f_1\cap\supp f_2=\emptyset$, thus summing \eqref{eq:right_side} and \eqref{eq:left_side} for $i=1,2$, we obtain
	\begin{align*}
		\int\hk_{R/2}(\cdot,T)\dif M_{T}
		&\ge\left(\int_{\supp{f_1}}\hk_{R/2}(\cdot,T)\dif M_{T}+\int_{\supp{f_2}}\hk_{R/2}(\cdot,T)\dif M_{T}\right)\\
		&\ge 2\,\frac{2}{\eps R}\left(\frac{2}{5}-2\eta\right)\eps R -2\,LE_1\eta\\
		&=\frac{8}{5}-C\eta
	\end{align*}
	for some $C$ depending on $E_1$. Choosing $\eta$ smaller, if needed, contradicts the assumption that $\int\hk_{R/2}(\cdot,T)\dif M_T\le\frac{3}{2}$, thus concluding the proof.
\end{proof}
\medskip
As \Cref{harnack_allard} implies \Cref{prop:allard_decay_final}, we obtain the following result as a corollary of \Cref{prop:harnack_brakke}.
\begin{proposition}[Decay of oscillations]\label{cor:osc_decay_brakke}
	For every $E_1$, there exist $C$ and $\beta$ with the following property. Let $\Mb$ be a Brakke flow with transport term $v$ in $Q_R$ that satisfies \Cref{ass:brakke_oscil_interno} {with $(0,0) \in \Sigma_\Mb$} and such that, for every $t\in[-R^2,0)$,
	\begin{equation*}
		\int\hk_R(\cdot,t)\dif M_t\le\frac{3}{2}.
	\end{equation*}
	Then, for every $r\ge C(\osc(Q_R)+\Lambda R^{2})$, it holds
	\begin{equation*}
		\osc(Q_{r})\le C(\osc(Q_R)+\Lambda R^{2}) \left(\frac{r}{R}\right)^\beta.
	\end{equation*}
\end{proposition}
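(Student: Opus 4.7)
The plan is to mimic the proof of \Cref{prop:allard_decay_final} almost verbatim, using \Cref{prop:harnack_brakke} in place of \Cref{harnack_allard}. After rescaling to $R=1$, I would introduce the auxiliary quantity
\[
	F(r) = \osc(Q_r) + K\Lambda r^{2}
\]
for a large constant $K=K(\eta)$ to be chosen, where $\eta$ is the Harnack constant provided by \Cref{prop:harnack_brakke}. The main claim is the one-step decay $F(\eta r)\le (1-\eta)F(r)$, valid at every scale $r\le 1$ for which $\osc(Q_r)\le \eta r$ and the hypotheses of \Cref{prop:harnack_brakke} are in force. The proof of this claim splits into two cases exactly as in \Cref{prop:allard_decay_final}: if $\Lambda r^{2}\le \osc(Q_r)$, then \Cref{prop:harnack_brakke} directly yields $\osc(Q_{\eta r})\le (1-\eta)\osc(Q_r)$ and one closes the estimate via $\eta^{2}\le 1-\eta$; if instead $\osc(Q_r)\le \Lambda r^{2}$, one uses the trivial monotonicity $\osc(Q_{\eta r})\le\osc(Q_r)$ and closes by choosing $K\ge (1-\eta-\eta^{2})^{-1}$.

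The part that is genuinely different from the Allard argument is checking that the hypotheses of \Cref{prop:harnack_brakke} remain satisfied along the iteration at smaller scales $r = \eta^k$. \Cref{ass:brakke_oscil_interno} is scale-invariant in the sense that it passes to sub-cylinders, and the condition $\Lambda r^{2}\le \osc(Q_r)$ in Case~1 is built into the case distinction. The delicate point is the Gaussian density bound $\int \hk_r(\cdot,t)\dif M_t\le 3/2$ for $t\in[-r^2,0)$. I would deduce this from the assumed bound at scale $1$: since the cutoff $\phin$ may be taken non-increasing, for $r\le 1$ one has the pointwise comparison $\hk_r(x,t)\le \hk_1(x,t)$, and since $[-r^2,0)\subset[-1,0)$, the desired bound follows directly from the assumption. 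This is the step I expect to be the most subtle, but it reduces to elementary pointwise estimates on the heat kernel.

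With the one-step decay at hand, the iteration and interpolation proceed exactly as in the Allard case. By induction, whenever $\eta^k\ge F(1)$ one obtains
\[
	\osc(Q_{\eta^k})\le F(\eta^k)\le (1-\eta)^{k}F(1),
\]
where the induction step uses $\osc(Q_{\eta^k})\le (1-\eta)^kF(1)\le\eta^{k+1}$ to verify the Harnack hypothesis $\osc(Q_{\eta^k})\le\eta\cdot \eta^k$ at each stage. For any $r\in[F(1),1]$ one picks $k$ with $\eta^{k+1}\le r<\eta^k$ and obtains $\osc(Q_r)\le Cr^{\beta}F(1)$ for $\beta$ chosen so that $\eta^{\beta}\ge 1-\eta$, which is the conclusion (after undoing the rescaling, and noting that $F(1)\lesssim \osc(Q_R)+\Lambda R^{2}$). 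The whole argument is therefore a faithful adaptation of the elliptic case, the only new ingredient being the propagation of the Gaussian density bound to smaller scales, which is handled by pointwise monotonicity of the kernel.
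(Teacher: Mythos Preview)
Your proposal is correct and follows exactly the approach the paper intends: the paper's own proof is the single sentence ``As \Cref{harnack_allard} implies \Cref{prop:allard_decay_final}, we obtain the following result as a corollary of \Cref{prop:harnack_brakke},'' and your write-up faithfully unpacks this. Your explicit check that the Gaussian density bound propagates to smaller scales via the pointwise inequality $\hk_r\le\hk_R$ (taking $\phin$ non-increasing, which is consistent with the paper's standing assumptions on the cutoff) is a detail the paper leaves implicit, and your treatment of it is correct.
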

%\begin{proof}[Proof of \Cref{cor:osc_decay_brakke}]
%	Let $K$ be a large constant. For $r>0$, define the function
%	\begin{equation*}
%		F(r)=\osc(Q_r)+K\Lambda r^{2}.
%	\end{equation*}
%	We claim that $K$ can be chosen large so that, if $F(r)\le \eta r$, then $F(\eta r)\le (1-\eta)F(r)$. Then an induction argument yields
%	\begin{equation*}
%		\osc(Q_{\eta^kR})\le F(\eta^kR)\le (1-\eta)^kF(R)
%	\end{equation*}
%	for every $r$ such that $(1-\eta)^{k-1}F(R)\le \eta\,\eta^{k-1} R$. The desired result follows by interpolation and up to choosing $\beta\in(0,1)$ small and $C>0$ large depending on $\eta$.
%	
%	We only have to check that $K$ can be chosen large so that the above claim is true.
%	We distinguish two cases. If $\Lambda r^{1+\varsigma}\le\osc(Q_r)$, then by \Cref{prop:harnack_brakke} it holds
%	\begin{align*}
%		F(\eta r)\le(1-\eta)\osc(Q_r)+K\Lambda(\eta R)^{1+\varsigma}\le(1-\eta)F(r)
%	\end{align*}
%	since $\eta$ is small.
%	Otherwise, $\osc(Q_r)\le\Lambda r^{1+\varsigma}$ and
%	\begin{align*}
%		F(\eta r)\le\osc(Q_r)+K\Lambda(\eta R)^{1+\varsigma}\le K\Lambda r^{1+\varsigma}\left(\frac{1}{K}+\eta^{1+\varsigma}\right)\le (1-\eta)K\Lambda r^{1+\varsigma}\le(1-\eta) F(r)
%	\end{align*}
%	if $K$ is chosen large enough depending on $\eta$.
%\end{proof}

\subsection{Brakke's theorem}
The present subsection is dedicated to the proof of the following version of Brakke's regularity theorem:
\begin{theorem}[Interior regularity]\label{thm:brakke_interior}
	For every $\alpha\in(0,1)$ and every $E_1$, there are positive and small constants $\theta$ and $\delta_0$ with the following property.
	Let $\Mb$ be a Brakke flow with transport term $v$ in $Q_1$. Assume that:
	\begin{itemize}
		\item $(0,0)\in\Sigma_\Mb$;
		\item for every $t\in[-1,0]$ and every $B_r(x)\subset B_1$, $M_t(B_r)\le E_1 r^m$;
		\item $\int_{B_1}\hk(\cdot,-\theta^2)\dif M_{-\theta^2}\le1+\delta_0$; 
		\item $\|v\|_{L^\infty(M)}\le\delta_0$.
	\end{itemize}
	Then $\Sigma_\Mb\cap Q_{\theta/8}$ is the graph of some function $u\in C^{1,\alpha}(Q_{\theta/8}^m;\RR^{d-m})$ with
	$$||u||_{C^{1,\alpha}}\le C\bigg(\inf_{S\in\Gr(m,d)}\osc_S(M,Q_1)+\|v\|_{L^\infty(M)}\bigg).$$
\end{theorem}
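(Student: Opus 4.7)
\emph{Overall strategy.} The plan is to mirror the three-step proof of \Cref{thm:allard_main}: first establish a parabolic improvement of flatness at unit scale, then verify its hypotheses at some initial scale from the Gaussian density assumption by compactness, then iterate to obtain a $C^{1,\alpha}$ graph and conclude via the Constancy Theorem. The parabolic improvement of flatness (the analogue of \Cref{thm:IOF_allard}) asserts: for every $\alpha\in(0,1)$ and $E_1$ there are $c,\varepsilon_0,\eta,C>0$ such that if $\Mb$ satisfies \Cref{ass:brakke_oscil_interno} and the Gaussian density bound of \Cref{cor:osc_decay_brakke} in $Q_R(X_0)$, with $X_0\in\Sigma_\Mb$, and if $\osc_S(\Mb,Q_R(X_0))\le\varepsilon R$ and $\|v\|_{L^\infty(M)}\le c\varepsilon R^{-1}$ for some $\varepsilon\le\varepsilon_0$ and $S\in\Gr(m,d)$, then there is $T\in\Gr(m,d)$ with $|T-S|\le C\varepsilon$ and $\osc_T(\Mb,Q_{\eta R}(X_0))\le\eta^{1+\alpha}\varepsilon R$.

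\emph{Proof of the improvement.} Exactly as in \Cref{thm:IOF_allard}, one argues by compactness and contradiction: a contradicting sequence $\Mb^j$ is rescaled by $F_j(x,t)=(Sx,\varepsilon_j^{-1}S^\perp x,t)$ into $B_1^S\times\overline{B_2^{S^\perp}}\times[-1,0]$, and by \Cref{cor:osc_decay_brakke} the Hausdorff limit $\tSigma$ of the rescaled space-time tracks is the graph of a parabolically \holder\ continuous function $u\colon Q_{1/2}^S\to S^\perp$; non-emptiness of each vertical fibre is ensured by the uniform mass lower bound coming from \Cref{prop:mono_brakke}. The crucial step, parallel to \Cref{lemma:uisharmonic}, is to show that $u$ is caloric in $Q_{1/4}^S$. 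Following the viscosity approach of \cite{savin2017viscosity,wangsmall}, suppose by contradiction that $u$ differs from the caloric extension $h$ of its boundary values on the parabolic boundary of $Q_{1/4}^S$; then for suitable small $\delta,\kappa>0$ the function
\begin{equation*}
 G_j(x,t)=\tfrac12\bigl|\varepsilon_j^{-1}S^\perp x-h(Sx,t)\bigr|^2+\tfrac{\delta}{2}|Sx|^2+\kappa t
\end{equation*}
attains its maximum on $\Sigma^j$ at an interior touching point $(x_j,t_j)$, and the algebraic computation of \Cref{lemma:uisharmonic}, with $\Delta h=0$ replaced by $\de_t h=\Delta h$, gives $\de_t G_j(x_j,t_j)-\inf_{T\in\Gr(m,d)}\dive_T\nabla G_j(x_j,t_j)<-\delta$ for $j$ large, while $\|v\|_{L^\infty(M)}|\nabla G_j(x_j,t_j)|\to0$; this contradicts the parabolic maximum principle for Brakke flows recorded in the appendix. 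Granted caloricity, classical interior estimates on $u$ select $T=\graph(\nabla u(0,0))$ and yield the desired $\eta^{1+\alpha}$ decay for $\eta$ small depending on $\alpha$.

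\emph{From the Gaussian density hypothesis to the iteration.} The hypotheses of the improvement of flatness at some initial scale $R\sim\theta$ are established by a second compactness argument, in the spirit of Step~1 of the proof of \Cref{thm:allard_main}. A contradicting sequence $\Mb^j$ subconverges as Brakke flows in $Q_1$ to a limit $\Mb^\infty$; the hypothesis $\int\hk(\cdot,-\theta^2)\,dM^j_{-\theta^2}\le1+\delta_j$, together with \Cref{prop:mono_brakke} and the mass bound $M^j_t(B_r(x))\le E_1r^m$, forces the Gaussian density of $\Mb^\infty$ to equal $1$ at every space-time point in a sub-cylinder, so $\Mb^\infty$ is a stationary multiplicity-one $m$-plane $S$ by the Brakke analogue of \cite[Theorem~5.3]{allard1972}. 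Weak convergence of the space-time measures and Hausdorff convergence of supports then deliver $\osc_S(\Mb^j,Q_\theta)\le\tfrac12\varepsilon_0\theta$, while monotonicity propagates the $3/2$ Gaussian density bound throughout $Q_\theta$. Iterating the improvement of flatness at every $X\in\Sigma_\Mb\cap Q_{\theta/8}$ and all dyadic scales produces tangent planes $T_X$ with $|T_X-S|\le C\varepsilon$ and $\osc_{T_X}(\Mb,Q_r(X))\le C\varepsilon r^{1+\alpha}$, hence $\Sigma_\Mb\cap Q_{\theta/8}$ is the $C^{1,\alpha}$ graph of a function $u$ over its $S$-projection with $\|u\|_{C^{1,\alpha}}\le C\varepsilon$; surjectivity of the projection onto $Q_{\theta/8}^m$ is obtained by a parabolic blow-up argument and the Constancy Theorem for Brakke flows, exactly as in Step~3 of the proof of \Cref{thm:allard_main}.

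\emph{Main obstacle.} The hardest step is the parabolic caloricity lemma: one must port \Cref{lemma:uisharmonic} to the time-dependent setting, absorb the additional $\de_t G_j$ term using the $\kappa t$ correction and the backward-in-time nature of the touching argument, and invoke the Brakke-flow maximum principle from the appendix in place of \Cref{prop:max_principle}. The initial compactness step is also delicate, since one must rule out simultaneously multiplicity jumps, higher-density points, and separated sheets from the single tight Gaussian density bound at the fixed time $t=-\theta^2$.
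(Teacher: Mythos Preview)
Your overall architecture matches the paper's, and your treatment of the improvement of flatness and the initial compactness step is correct (the paper states the improvement separately as \Cref{thm:brakke_IOF_NOB} but proves it the same way you describe). The gap is in Step~3.

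You write that surjectivity of the projection follows from ``a parabolic blow-up argument and the Constancy Theorem for Brakke flows, exactly as in Step~3 of the proof of \Cref{thm:allard_main}''. This is where the parabolic argument genuinely diverges from the elliptic one, and the paper does \emph{not} invoke a constancy theorem. In the elliptic case one finds a ball in $S$ whose boundary touches the projection, blows up, and obtains a stationary varifold supported in a half-plane, which the Constancy Theorem rules out. In the parabolic setting the touching argument itself must account for time: the paper connects the missed point $(x_0',t_0)$ to the origin by a smooth curve $p\colon[t_0,0]\to B_{\theta/2}$, slides a parabolic cylinder $Q^S_\rho(p(t),t)$ along it, and locates the first time the cylinder meets $S(\Sigma_\Mb)$. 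One then has to exclude touching on the top and bottom caps (using monotonicity) before concluding that the contact is on the lateral boundary. After blow-up one obtains a limit Brakke flow with $\Sigma_{\Mb^\infty}\subset T^+\times(-\infty,0]$ for a half-plane $T^+$; there is no off-the-shelf constancy result here, and the paper instead contradicts this inclusion via the parabolic maximum principle (\Cref{prop:max_brakke}) applied to the explicit test function
\[
f(x,t)=\tfrac{1}{2}|T^\perp x|^2-\tfrac{1}{2m}|x''|^2+\tfrac{1}{2}|x_m|^2-x_m+\tfrac{1}{2m}t,
\]
which has a local maximum on $\Sigma_{\Mb^\infty}$ at $(0,0)$ while satisfying $\trace_m D^2f-\partial_t f>0$ there. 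Your sketch would need to supply this touching-by-curves construction and the maximum-principle barrier in place of the constancy-theorem shortcut.
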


	In the above statement, by $u\in C^{1,\alpha}(Q_{\theta/8}^m;\RR^{d-m})$ we mean that $(x,t)\mapsto u(x,t)$ is differentiable with respect to $x$ and that there is $C>0$ such that, for every $(x',t),(y',s)\in Q_{\theta/8}^m$ it holds
	\begin{equation*}
		|u(x,t)-u(y,s)-\nabla u(x,t)\cdot (y-x)|\le C\left(|x-y|^2+|t-s|\right)^{\frac{1+\alpha}{2}}.
	\end{equation*}
	$\|u\|_{C^{1,\alpha}}$ corresponds(up to a multiplicative constant) to the smallest $C$ for which the above inequality holds.
\begin{remark}
	This result is analogous to the \enquote{end-time regularity} proved in \cite{stuvard-tonegawa}, although our proof requires the forcing term $v$ to be a $L^\infty$ function. We also remark that, differently from the case of minimal varifolds, higher regularity on $v$ does not straightforwardly yield higher regularity for $\Sigma_\Mb$. In this regard, see \cite{tonegawa_second_der}.
\end{remark}

\medskip

Similarly to \Cref{thm:allard_main}, \Cref{thm:brakke_interior} follows from an \textit{improvement of flatness}, which we state and prove next.
We first make some preliminary assumptions:
\begin{assumption}\label{ass:brakke_IOF_section}
	\hfill
	
	For some $X_0=(x_0,t_0)$ and $R>0$:
	\begin{enumerate}
		\item $\Mb$ is a Brakke flow with transport term $v$ in $Q_R(X_0)$.
		\item \label{ass:0inSigma_IOF_brakke} $X_0\in\Sigma_\Mb$.
		\item \label{ass:double_brakke_IOF}For every $(x,t)\in Q_{R/2}(X_0)$ and every $-\frac{R^2}{4}\le\tau<0$, it holds
		\begin{equation*}
			\int_{B_{R/2}(x)}\hk(\cdot-x,\tau)\,dM_{t}\le\frac{3}{2}.
		\end{equation*}
	\end{enumerate}
\end{assumption}

\begin{theorem}[Improvement of flatness]\label{thm:brakke_IOF_NOB}
	For every $\alpha\in(0,1)$, there are universal constants $\eps_0, \eta$ (small) and $C$ (large) with the following property.
	Let $\Mb$ satisfy \Cref{ass:brakke_IOF_section}. Assume, in addition, that for some $\eps\le\eps_0$ and $S\in\Gr(m,d)$:
	\begin{equation}\label{eq:both_conditions}
		\osc_S(\Mb,Q_R(X_0))\le \eps R\qquad\mbox{ and }\qquad C\|v\|_{L^\infty(M)}\le\frac{\eps}{R}.
	\end{equation}
%	\begin{gather}\label{eq:flat_track}
%		\Sigma_\Mb\cap Q_R(X_0)\subset\{(y,s)\colon |S^\perp (y-x_0)|\le\eps R\}
%	\end{gather}
%	and
%	\begin{equation}\label{eq:small_transport}
%		C
%		\|v\|_{L^{\infty}(\Mb)}\le\frac{\eps}{R}
%	\end{equation}
	Then there exists $T\in\Gr(m,d)$ with $|T-S|\le C\eps$ such that
%	\begin{equation}\label{eq:tesi_brakke_iof_nobdry}
%		\Sigma_\Mb\cap Q_{\eta R}(X_0)\subset\{(y,s)\colon|T^\perp (y-x_0)|\le\eta^{1+\alpha}\eps R\}.
%	\end{equation}
	\begin{equation}\label{eq:tesi_brakke_iof_nobdry}
	\osc_T(\Mb,Q_{\eta R}(X_0))\le \eta^{1+\alpha}\eps R.
	\end{equation}
\end{theorem}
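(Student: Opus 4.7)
The plan is to adapt the contradiction/compactness strategy of \Cref{thm:IOF_allard} to the parabolic setting, replacing harmonic functions by caloric ones and the varifold maximum principle by the Brakke-flow maximum principle recorded in the Appendix. After parabolic rescaling reducing to $R=1$, $X_0=(0,0)$, I argue by contradiction: assume sequences $\eps_j\searrow 0$, Brakke flows $\Mb^j$ with transport $v^j$ and a plane $S$ such that $\osc_S(\Mb^j,Q_1)\le\eps_j$ and $\|v^j\|_\infty\le\eps_j/C$, yet \eqref{eq:tesi_brakke_iof_nobdry} fails for every $T\in\Gr(m,d)$ and for the fixed $\eta$ to be chosen below.

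First I produce the limiting graph. \Cref{cor:osc_decay_brakke} applied at every $X\in\Sigma_{\Mb^j}\cap Q_{1/2}$ yields $\osc_S(\Mb^j,Q_r(X))\le C\eps_j r^\beta$ for $r\in(C\eps_j,1/4]$. Rescaling the $S^\perp$-direction by $\eps_j^{-1}$ via $F_j(x,t)=(Sx,\eps_j^{-1}S^\perp x,t)$ and setting $\tSigma^j=F_j(\Sigma_{\Mb^j})$, up to subsequences one obtains Hausdorff convergence on compact subsets of $B^S_{1/2}\times\overline{B^{S^\perp}_2}\times[-1/16,0]$ to a relatively closed set $\tSigma$. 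The integer-density assumption combined with \Cref{prop:mono_brakke} applied to $f\equiv 1$ excludes empty vertical fibres, and the decay estimate above forces each fibre to be a singleton, so that $\tSigma$ is the graph of a map $u:B^S_{1/2}\times[-1/16,0]\to S^\perp$ with $u(0,0)=0$ and the parabolic \holder{} bound $|u(x',t)-u(y',s)|\le C(|x'-y'|^2+|t-s|)^{\beta/2}$.

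The main obstacle is to show that $u$ is caloric on a smaller parabolic cylinder, which I carry out by a parabolic version of the viscosity argument of \Cref{lemma:uisharmonic}. Let $h$ solve the heat equation on $B^S_{1/4}\times(-1/64,0]$ with the same parabolic boundary data as $u$. If $u\neq h$, then for some small $\delta\in(0,1/2)$ the barrier
\begin{equation*}
	G_j(x,t) = \tfrac12\Bigl|\tfrac{S^\perp x}{\eps_j}-h(Sx,t)\Bigr|^2+\tfrac{\delta}{2}|Sx|^2-\delta t
\end{equation*}
attains, for $j$ large enough, an interior maximum of $G_j|_{\Sigma_{\Mb^j}}$ at some $X_j=(x_j,t_j)$. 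Using $\partial_t h=\Delta_S h$ and exactly the two-case analysis of \Cref{lemma:uisharmonic} (splitting $|T-S|\le\gamma$ from $|T-S|\ge\gamma$), one obtains
\begin{equation*}
	-\partial_t G_j(X_j)+\dive_T\nabla G_j(X_j)\ge\delta\qquad\text{for every } T\in\Gr(m,d),
\end{equation*}
while $|\nabla G_j(X_j)|\le C\eps_j^{-1}$ together with $\|v^j\|_\infty\le\eps_j/C$ makes the extra transport contribution $|v^\perp\cdot\nabla G_j|$ arbitrarily small as $C\to\infty$. This contradicts the Brakke-flow maximum principle from the Appendix, forcing $u$ to be caloric.

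Once $u$ is caloric, interior parabolic Schauder estimates give uniform bounds on $|\nabla u|$, $|\partial_t u|$ and $|D^2_x u|$ on $B^S_{1/8}\times[-1/256,0]$, and a first-order spatial Taylor expansion of $u$ around $(0,0)$ yields $|u(x',t)-\nabla u(0,0)\cdot x'|\le C(|x'|^2+|t|)\le \tfrac12\eta^{1+\alpha}$ on $B^S_{2\eta}\times[-4\eta^2,0]$ for $\eta$ small depending on $C$ and $\alpha$. Translating back through $F_j^{-1}$, this forces $|S^\perp x-\eps_j\nabla u(0,0)\,Sx|\le\eps_j\eta^{1+\alpha}$ for every $(x,t)\in\Sigma_{\Mb^j}\cap Q_\eta$ and $j$ large; choosing $T$ to be the graph of the linear map $\eps_j\nabla u(0,0):S\to S^\perp$, which satisfies $|T-S|\le C\eps_j$, contradicts the assumed failure of \eqref{eq:tesi_brakke_iof_nobdry} and closes the argument.
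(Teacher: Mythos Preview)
Your proposal follows the same contradiction/compactness strategy as the paper: rescale, pass to a Hausdorff limit of the stretched space--time tracks, use \Cref{cor:osc_decay_brakke} to see the limit is a single-valued \holder\ graph, prove it is caloric via the parabolic maximum principle \Cref{prop:max_brakke} with a barrier of the form $G_j$, and conclude by parabolic Schauder estimates. The barrier you write down and the two-case tangent-plane analysis are precisely the parabolic version of \Cref{lemma:uisharmonic} that the paper also invokes.

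There is, however, a genuine gap in your argument that the vertical fibres of $\tSigma$ are nonempty. Your appeal to ``the integer-density assumption combined with \Cref{prop:mono_brakke} applied to $f\equiv 1$'' does not do the job: Huisken's monotonicity yields a lower bound on the Gaussian integral \emph{centred at a point already in the support}; it does not by itself force the support of $M^j_t$ to project onto every $x'\in B^S_{1/2}$ at every time. The paper closes this by first invoking Brakke-flow compactness (\Cref{prop:brakke_compactness}) to obtain a limit Brakke flow $\Mb^\infty$ without transport, and then arguing (via integrality and the $\Mc^2_m$ structure of each time slice supported in $S$) that $M^\infty_t\ge\Hc^m\rest S$ for a.e.~$t$; an empty fibre would then contradict lower semicontinuity of mass, exactly as in the proof of \Cref{thm:IOF_allard}. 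You need to insert this compactness step. A smaller issue: in your contradiction setup you fix $C$ and then write ``as $C\to\infty$'', which is incoherent. The clean formulation, parallel to $c_j\searrow 0$ in the proof of \Cref{thm:IOF_allard}, is to negate the existence of \emph{all} the constants, so that along the sequence one may take $\|v^j\|_\infty/\eps_j\to 0$; then $\|v^j\|_\infty\,|\nabla G_j(X_j)|\to 0$ and \Cref{prop:max_brakke} applies cleanly.
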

\begin{proof}
	By rescaling and translating, we may assume $R=1$ and $X_0=(0,0)$.
	We argue by contradiction and compactness: assume there exist a sequence $\eps_j\searrow0$ and two sequences $\{\Mb^j\}$ and $\{v^j\}$ such that $\Mb^j$ is a Brakke flow with transport $v^j$ for which the assumptions of the theorem are satisfied with $\eps$ replaced by $\eps_j$.
	For brevity, we let $\Sigma^j:=\Sigma_{\Mb^j}$.
	
	\textbf{Step 1: compactness and convergence to a plane.} By \Cref{prop:brakke_compactness} below, up to extracting a subsequence (which we do not relabel), $\{\Mb^j\}$ converges to a Brakke flow without transport term $\Mb^\infty$. Moreover, $M^\infty_t(S^\compl)=0$ for every $t$, hence $M^\infty_t = \Theta^m(M^\infty_t,\cdot)\Hc^m\rest S$. Now, for almost every $t$, $M^\infty_t\in\Mc_m^2$ and, by \Cref{ass:double_brakke_IOF} in \Cref{ass:brakke_IOF_section} and by Huisken's monotonicity, $\Theta^m(M^\infty_t,x)\le E_1$ for every $x$. In particular, by \eqref{eq:bounded_first_var}, for every $\phi\in C^\infty_c(S;S)$ it holds
	\begin{equation*}
		\int_S\Theta^m\dive\phi = \int\dive_S\tilde\phi\dif M^\infty_t \le C\left(\int|\phi|^2\dif M^\infty_t\right)^{1/2}\le C \left(E_1\int_S|\phi|^2\right)^{1/2},
	\end{equation*}
	where $\tilde\phi\in C^\infty_c(\RR^d;\RR^d)$ is any extension of $\phi$ such that $||\tilde\phi||_\infty\le||\phi||_\infty$. Hence $S\ni x\mapsto\Theta^m(M^\infty_t,x)$ is a locally $W^{1,2}$ function and, by \Cref{def:BF}, it is integer-valued. Hence, for almost every $t$, either $\Theta^m(M^\infty_t,\cdot)\equiv0$ or $\Theta^m(M^\infty_t,\cdot)\ge1$ for $\Hc^m$-almost every $x$. However, $(0,0)\in\Sigma_{\Mb^\infty}$ hence, by \eqref{eq:def_brakke}, it cannot be that $M^\infty_t=0$ for some $t<0$. Therefore $M^\infty_t\ge\Hc^m\rest S$ for almost every $t\in[-1,0]$.
	
	\textbf{Step 2: Hausdorff convergence. }Let now
	\begin{gather*}
		\tSigma^j=\left\{\left(Sx,\frac{1}{\eps_j}S^\perp x,t\right)\colon (x,t)\in\Sigma^j\right\}\subset B_1^S\times B_1^{S^\perp}\times[-1,0].
	\end{gather*}
	By \Cref{ass:0inSigma_IOF_brakke} in \Cref{ass:brakke_IOF_section}, $\tSigma^j\neq\emptyset$. Therefore, up to subsequences, $\{\tSigma^j\}$ converges in the Hausdorff distance to some relatively closed set $\tSigma\subset B_1^S\times B_1^{S^\perp}\times[-1,0]$.
	
	\newcommand{\ub}{\mathbf{u}}
	
	\textbf{Step 3: $\tSigma$ is a graph. }Let
	\begin{equation*}
		\ub(x',t)=\{y\in B_1^{S^\perp}\colon (x',y,t)\in\tSigma\}.
	\end{equation*}
	Arguing as in the proof of \Cref{thm:IOF_allard}, by Step 1 we first show that $\ub(x',t)\neq\emptyset$ for every $(x',t)\in Q_1^S$. Next, by \Cref{cor:osc_decay_brakke}, we conclude that $\ub(x',t)=\{u(x',t)\}$ is a singleton for every $(x',t)\in Q_{1/2}^S$ and that $u$ is \holder\ continuous.
	
	\textbf{Step 4: conclusion. }Arguing as in \Cref{lemma:uisharmonic}, \Cref{prop:max_brakke} below yields that $u|_{Q^S_{1/4}}$ is a solution to the heat equation. The desired result follows by the Hausdorff convergence established in Step 2 and by classical Schauder estimates for the heat equation.
\end{proof}

The following result was used in the proof of \Cref{thm:brakke_IOF_NOB}.
\begin{proposition}[Compactness]\label{prop:brakke_compactness}
	Let $\{\Mb^j\}, \{v_j\}$ be two sequences such that, for each $j$, $\Mb^j$ is a Brakke flow with transport term $v_j$ in $Q_R$. Assume that, for every $W\compact B_R$,
	\begin{equation*}
		\sup_{t\in[-R^2,0]}\sup_{j\in\NN}M_t(W)<\infty
	\end{equation*}
	and that
	\begin{equation*}
		\|v_j\|_{L^\infty(M^j)}\to0.
	\end{equation*}
	Then there exist a subsequence $\{j_\ell\}\subset\NN$ and a Brakke flow $\Mb$ without transport term in $Q_R$ such that, for every $t\in[-R^2,0]$, $M^{j_\ell}_t\weakly M_t$
%	\begin{equation*}
%		M^{j_\ell}_t\weakly M_t
%	\end{equation*}
	as Radon measures in $B_R$.
\end{proposition}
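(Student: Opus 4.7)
I would follow the standard three-step template for Brakke-flow compactness: uniform energy and one-sided time bounds, Helly selection giving convergence at every time, and lower-semicontinuous passage to the limit. For $\phi \in C^1_c(B_R)$ nonnegative and time-independent, the pointwise bound
\[
	(-\phi H + \nabla\phi)\cdot(H + v_j^\perp) \le -\tfrac{1}{2}\phi|H|^2 + C_\phi\bigl(1+|v_j|^2\bigr),
\]
obtained by completing the square, combined with the Brakke inequality \eqref{eq:def_brakke} tested on $\phi$, gives for all $-R^2 \le t_1 < t_2 \le 0$:
\[
	M^j_{t_2}(\phi) + \tfrac{1}{2}\int_{t_1}^{t_2}\!\!\int \phi|H_{M^j_t}|^2 \,dM^j_t\,dt \le M^j_{t_1}(\phi) + C(\phi)\bigl(1+\|v_j\|_{L^\infty(M^j)}^2\bigr)(t_2-t_1).
\]
By the two hypotheses this yields both a locally uniform space-time $L^2$ bound on $H_{M^j_t}$ and a one-sided Lipschitz-in-time bound on $t \mapsto M^j_t(\phi)$, uniform in $j$.

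Rearranging the previous display, the function $t \mapsto M^j_t(\phi) + K(\phi)t$ is monotone non-increasing for a constant $K(\phi)$ independent of $j$. Fixing a countable dense family $\{\phi_k\} \subset C^1_c(B_R)$ and applying Helly's selection theorem together with a diagonal extraction, I obtain a subsequence $\{j_\ell\}$ such that $M^{j_\ell}_t(\phi_k)$ converges for every $t \in [-R^2, 0]$ and every $k$. By density of $\{\phi_k\}$ in $C_c(B_R)$ and the uniform mass bound, this upgrades to $M^{j_\ell}_t \weakly M_t$ for every $t \in [-R^2, 0]$, where each $M_t$ is a Radon measure on $B_R$.

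It remains to verify that $\Mb = \{M_t\}$ is a Brakke flow without transport. For almost every $t$, the $L^2$ bound on $H_{M^{j_\ell}_t}$ combined with Allard's closure theorem for integer rectifiable varifolds of locally bounded first variation ensures that $M_t \in \Mc_m^2(B_R)$ with integer density. To pass to the limit in \eqref{eq:def_brakke} with a general $\phin \in C^1_c(B_R \times [-R^2,0])$, the transport contribution vanishes because $\|v_{j_\ell}\|_{L^\infty(M^{j_\ell})} \to 0$ while $H_{M^{j_\ell}_t}$ is equi-$L^2$; the linear term $\int\!\!\int \nabla\phin \cdot H\,dM^{j_\ell}_t\,dt$ converges by weak-$L^2$ convergence of the vector-valued measures $H_{M^{j_\ell}_t}\,dM^{j_\ell}_t\,dt$; and the quadratic term $-\int\!\!\int \phin|H|^2\,dM^{j_\ell}_t\,dt$ is handled by lower semicontinuity of $L^2$-norms under weak convergence.

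The main obstacle is precisely this last step: one needs to identify the weak-$L^2$ limit of $H_{M^{j_\ell}_t}\,dM^{j_\ell}_t\,dt$ with the generalized mean curvature $H_{M_t}\,dM_t\,dt$ of the limit flow (using convergence of the first-variation functionals) and then invoke weak lower semicontinuity. This is the classical delicate step in Brakke's compactness theory, and the hypothesis $\|v_j\|_{L^\infty(M^j)} \to 0$ is exactly what makes the identification of the limiting curvature unambiguous.
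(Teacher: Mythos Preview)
Your outline is correct and follows the standard Ilmanen--Brakke compactness scheme. Note, however, that the paper itself does not give a proof of this proposition: it simply cites \cite[\S7]{ilmanen_thesis} for the case without transport term and asserts that the extension to bounded transport is a straightforward modification (referring also to \cite[Section 4.2.2]{gaspa_thesis}). Your sketch is therefore considerably more detailed than what the paper provides, and it is essentially a reconstruction of the argument in those references.

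Two minor technical points worth tightening in your write-up. First, when you invoke Allard's integral compactness ``for almost every $t$'', the uniform bound you have is only on the time-integrated quantity $\int\!\!\int\phi|H|^2\,dM^{j_\ell}_t\,dt$, not on the slice at each fixed $t$; you need Fatou's lemma to conclude that $\liminf_\ell\int_W|H_{M^{j_\ell}_t}|^2\,dM^{j_\ell}_t<\infty$ for a.e.\ $t$, and then extract a $t$-dependent subsequence along which Allard applies and whose varifold limit has mass measure $M_t$ (this is consistent since you already know $M^{j_\ell}_t\weakly M_t$). Second, in the completing-the-square step the bound $|\nabla\phi|^2/\phi\le C_\phi$ requires either restricting to $\phi$ of the form $\psi^2$ with $\psi\in C^1_c$ or a standard approximation; this is routine but should be noted.
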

\begin{proof}
	The proof, in the case of Brakke flows without transport term, can be found in \cite[\S7]{ilmanen_thesis}: the more general case can be proved by straightforward modifications. See also \cite[Section 4.2.2]{gaspa_thesis}.
\end{proof}

We conclude the present section by sketching the proof of \Cref{thm:brakke_interior}.
\begin{proof}[Proof of \Cref{thm:brakke_interior}]
	The proof is analogous to that of \Cref{thm:allard_main}, thus we will only highlight the relevant differences.
	
	\textbf{Step 1 . }We show that the assumptions of \Cref{thm:brakke_IOF_NOB} are in place for every $X_0\in\Sigma_{\Mb}\cap Q_{3\theta/4}$ and $R=\theta/4$, provided $\theta$ and $\delta_0$ in the statement of \Cref{thm:brakke_interior} are chosen small enough.
	In order to do so, we argue by contradiction. For $E_1$ and $\alpha$ fixed, assume there is a sequence $\{\Mb^j\}$ of Brakke flows with transport term $v_j$ that satisfy the assumptions of \Cref{thm:brakke_interior} with $\delta_0$ and $\theta$ replaced by some $\delta_j\searrow0$ and $\theta_j\searrow0$, respectively. By \Cref{prop:brakke_compactness} above, the rescalings
	\begin{equation*}
		\tilde\Mb^j = \left\{\tilde M^j_t = \theta_j^{-m}(\mu_{\theta_j})_\sharp M^j_{\theta_j^2t}\right\}_{t\in[-\theta_j^{-2},0]}
	\end{equation*}
	(where $\mu_r(y) = \frac{y}{r}$) converge to a Brakke flow $\tilde\Mb$ without transport term in $\RR^d\times(-\infty,0]$. By Huisken's monotonicity formula, $\tilde\Mb$ must be a stationary unit-density plane. Therefore the assumptions of \Cref{thm:brakke_IOF_NOB} are satisfied for $j$ large enough.
	
	\textbf{Step 2. }With minor modifications from Step 2 of the proof of \Cref{thm:allard_main}, one shows that $\Sigma_\Mb\cap Q_{3\theta/4}$ is the graph of some function $u:S(\Sigma_\Mb\cap Q_{3\theta/4})\to\RR^{d-m}$ and that, for every $(x',t)\in S(\Sigma_\Mb\cap Q_{3\theta/4})$, there is $L_{(x',t)}$ so that, for every $(y',s)\in S(\Sigma_\Mb\cap Q_{3\theta/4})$, it holds
	\begin{equation}\label{eq:infatti_holder}
		|u(y',s)-u(x',t)-L_{(x',t)}(y'-x')|\le C\eps(|x'-y'|^2+|t-s|)^{\frac{1+\alpha}{2}},
	\end{equation}
	where $\eps=\osc_S(\Sigma_\Mb,Q_1)+||v||_{L^\infty}$.
	
	\textbf{Step 3. }In order to prove that $S(\Sigma_\Mb\cap Q_{3\theta/4})$ covers all of $Q_{\theta/2}^S$, we argue by contradiction. Assume there is $(x_0',t_0)\in Q_{\theta/2}^S\setminus S(\Sigma_\Mb)$ with $t_0<0$. Then there exists a smooth curve $p:[t_0,0]\to B_{\theta/2}$ and $\rho>0$ with the following properties: 
	\begin{equation*}
		\begin{cases}
			p(t_0)=x_0'\mbox{ and }p(0)=0;\\
			Q_\rho^S(p(t),t)\subset Q_{\theta/2}^S;\\
			Q_\rho^S(x_0',t_0)\subset S(\Sigma_\Mb)^\compl.
		\end{cases}
	\end{equation*}
	For brevity, we let $Q(t):=Q^S_\rho(p(t),t)$.
	Let now
	\begin{equation*}
		\bar t  = \inf\{t\colon Q(t)\cap S(\Sigma_\Mb)\neq\emptyset\}.
	\end{equation*}	
	Then $\mathrm{Int}Q(\bar t)\cap S(\Sigma_\Mb)=\emptyset$ and there is $(y_0,s_0)\in\Sigma_\Mb$ such that $(y_0',s_0)\in\partial Q(\bar t)$.
	By continuity of $p$, we exclude the case $y_0'\in B_\rho^S(p(\bar t))$ and $s_0=\bar t-\rho^2$. Moreover, if it were $y_0'\in B_\rho^S(p(\bar t))$ and $s_0=\bar t$, then by monotonicity (see, for instance, \cite[Proposition 3.6]{Tonegawabook}) it would be $Q(t)\cap S(\Sigma_\Mb)\neq\emptyset$ for $t<\bar t$ close enough to $\bar t$, which would contradict the choice of $\bar t$.
	Therefore $\bar t-\rho^2\le s_0\le\bar t$ and $y_0'\in\partial B_\rho^S(p(\bar t))$.
	In particular,
	\begin{equation}\label{eq:outside_half_space}
		S(\Sigma_\Mb)\cap\{s<s_0\}\subset \bigcup_{s<s_0}\left(B_\rho^S(p(s+\bar t-s_0))\right)^\compl\times\{s\}.
	\end{equation}
	
	Let us now consider a sequence $r_j\searrow0$ and define the dilations
	\begin{equation}\notag
		\Mb^j = \left\{M^j_t = r_j^{-m}(\mu_{r_j,y_0})_\sharp M_{s_0+r_j^2t}\right\}_{t\in[-r_j^{-2},0]}
	\end{equation}
	(where $\mu_{r,x_0}(y)=\frac{y-x_0}{r}$).
	\Cref{prop:brakke_compactness} yields that, up to subsequences, $\Mb^j$ converges to a limit Brakke flow $\Mb^\infty$ (without transport term).
	Then \eqref{eq:infatti_holder} and \eqref{eq:outside_half_space} imply that there exists a $m$-dimensional half plane $T^+$ such that
	\begin{equation*}
		\Sigma_{\Mb^\infty}\subset T^+\times(-\infty,0].
	\end{equation*}
	Moreover, since $(y_0,s_0)\in\Sigma_\Mb$, we have $(0,0)\in\Sigma_{\Mb^\infty}$.
	We conclude by showing that this violates \Cref{prop:max_brakke}. Up to a change of coordinates, say $T^+=\{x_{m+1}=\dots=x_d=0\mbox{ and }x_m>0\}$ and let
	\begin{equation}\notag
		f(x,t)= \frac{|T^\perp x|^2}{2}-\frac{|x''|^2}{2m}+\frac{|x_m|^2}{2}-x_m+\frac{1}{2m}t,
	\end{equation}
	where $x'' = (x_1,\dots,x_{m-1})$.
	Then $f|_{\Sigma_{\Mb^\infty}\cap \{t\le0\}}$ has a local maximum at $(0,0)$. However, it holds $\de_tf(0,0) = \frac{1}{2m}$ and
	\begin{equation*}
		\trace_m D^2f(0,0) = 1-(m-1)\frac{1}{m} = \frac{1}{m},
	\end{equation*}
	which contradicts \Cref{prop:max_brakke}, thus proving the desired result.
\end{proof}

\appendix
\section{Maximum principles}
For the sake of completeness, we prove two maximum principles which were used in the proofs of Theorems \ref{thm:IOF_allard} and \ref{thm:brakke_IOF_NOB}. {The proofs of \Cref{prop:max_principle} and \Cref{prop:max_brakke} are inspired by \cite{white_areablowup} and \cite[Theorem 14.1]{white_notes}, respectively.} In what follows, for a symmetric matrix $A$, we let $\trace_m A$ denote the sum of the $m$ smallest eigenvalues of $A$.
\begin{proposition}\label{prop:max_principle}
	Let $M\in\Mc_m^\infty(U)$ be such that $\Lambda:=\|H\|_{L^\infty(M)}<\infty$. If $f\in C^2(U)$ is such that $f|_{\supp M}$ achieves a local maximum at $x_0\in\supp M\cap \mathrm{Int}\,U$, then 
	\begin{equation*}
		\trace_m D^2f(x_0)\le \Lambda|\nabla f(x_0)|.
	\end{equation*}
\end{proposition}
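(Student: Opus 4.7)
The plan is to argue by contradiction: assume $\trace_m D^2 f(x_0) > \Lambda |\nabla f(x_0)|$, and test the first variation identity \eqref{eq:first_var} with a compactly supported vector field built from the positive part of a suitable perturbation of $f$. The outcome will be an inequality that forces an obviously positive quantity to vanish.

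The first step is to replace $f$ by $g(x) := f(x) - \tfrac{\eps}{2}|x-x_0|^2$ with $\eps>0$ small enough that the strict inequality $\trace_m D^2 g(x_0) > \Lambda|\nabla g(x_0)|$ is preserved (using $\nabla g(x_0) = \nabla f(x_0)$ and $\trace_m D^2 g(x_0) = \trace_m D^2 f(x_0) - m\eps$). Since $\dive_S \nabla g$ is the trace of $D^2 g$ restricted to the $m$-plane $S$, Courant--Fischer yields $\dive_S \nabla g \ge \trace_m D^2 g$, and continuity gives $r,\delta>0$ such that
\[
  \dive_S \nabla g(x) \;\ge\; \Lambda|\nabla g(x)|+\delta \qquad\text{for all } x \in B_r(x_0),\ S \in \Gr(m,d).
\]
After shrinking $r$ so that the local-maximum property $f \le f(x_0)$ holds on $\supp M \cap \overline{B_r(x_0)}$, one has $g(x) \le g(x_0) - \tfrac{\eps}{2}|x-x_0|^2$ there; hence for any $c \in \bigl(g(x_0)-\tfrac{\eps r^2}{2},\,g(x_0)\bigr)$ the set $\{g \ge c\} \cap \supp M \cap \overline{B_r(x_0)}$ is compactly contained in $B_r(x_0)$.

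Next I would test \eqref{eq:first_var} with $F := \eta\,(g-c)^+ \nabla g$, where $\eta \in C^\infty_c(B_r(x_0))$ equals $1$ on a neighborhood of the compact set above; the mild non-smoothness of $(g-c)^+$ is removed either by mollifying with $\phi_\nu(g-c) \nearrow (g-c)^+$ and letting $\nu\to 0$, or equivalently by extending \eqref{eq:first_var} to Lipschitz compactly-supported fields, which is legitimate since $\|H\|_{L^\infty(M)} \le \Lambda < \infty$. On $\supp F$ the cutoff equals $1$, so
\[
  \dive_S F \;=\; \chi_{\{g>c\}}|\Id_S \nabla g|^2 + (g-c)^+ \dive_S \nabla g.
\]
Plugging this into \eqref{eq:first_var} and bounding $-\int H \cdot F \,\dif M \le \Lambda\int (g-c)^+|\nabla g|\,\dif M$ gives
\[
  \int \chi_{\{g>c\}}|\Id_{T_xM}\nabla g|^2\,\dif M + \int (g-c)^+\dive_{T_xM}\nabla g\,\dif M \;\le\; \Lambda\int (g-c)^+|\nabla g|\,\dif M.
\]
Inserting the pointwise lower bound $\dive_{T_xM}\nabla g \ge \Lambda|\nabla g|+\delta$ inside $B_r(x_0)$ absorbs the right-hand side and leaves $\delta \int (g-c)^+\,\dif M \le 0$. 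Since $x_0 \in \supp M$ and $(g-c)^+$ is strictly positive on a neighborhood of $x_0$, this integral is positive, a contradiction.

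The argument is essentially a touching strong-maximum-principle calculation, and the geometry is not the hard part. The one thing to keep straight is the order of the three small parameters ($\eps$ first to preserve the strict divergence inequality at $x_0$, then $r$ for uniform control and the local-maximum property, and finally $c$ inside the specified window so that $F$ is compactly supported in $B_r$); the regularity issue for the truncation $(g-c)^+$ is standard.
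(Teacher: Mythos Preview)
Your proof is correct and follows essentially the same route as the paper's: both argue by contradiction and test \eqref{eq:first_var} with (a version of) $f^{+}\nabla f$ after arranging that the maximum is strict and the relevant super-level set sits compactly in a ball where the strict inequality $\dive_S\nabla f>\Lambda|\nabla f|$ holds, then drop the non-negative tangential-gradient term to reach $\delta\int f^{+}\,\dif M\le 0$. Your write-up is just a bit more explicit than the paper's about the strict-maximum reduction (via the $-\tfrac{\eps}{2}|x-x_0|^2$ perturbation) and the need for the cutoff $\eta$.
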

\begin{proof}
	Without loss of generality, we assume that $x_0=0$ and that $f$ has a global strict maximum at $0$.	By contradiction, assume that there is $r>0$ such that
	\begin{equation*}
		\trace_mD^2f(x)- \Lambda|\nabla f(x)|\ge\delta
	\end{equation*}
	for every $x\in B_r$.
	Up to choosing $r$ smaller and up to adding a constant to $f$, we also assume that $f(0)>0$ and that $\{f>0\}\cap\supp M\subset B_r$.
%	\begin{equation*}
%		\sup_{x\in\supp M\setminus B_r}f<0<f(0).
%	\end{equation*}
	By mollification, \eqref{eq:first_var} and the assumption $\|H_M\|_\infty\le\Lambda$ give
	\begin{align*}
		\Lambda\int_{\{f>0\}}f|\nabla f|\dif M&\ge\int_{\{f>0\}}\dive_{T_xM}(f\nabla f)\dif M \\
		&= \int_{\{f>0\}}\left(\left|\nabla_{T_xM} f(x)\right|^2 + f(x)\dive_{T_xM}\nabla f(x)\right)\dif M(x)\\
		&\ge \int_{\{f>0\}} f\dive_{T_xM}\nabla f\dif M
	\end{align*}
	where $\nabla_{T_xM} f(x)$ is the projection of $\nabla f(x)$ onto $T_xM$.
	This yields
	\begin{equation*}
		\delta\int_{\{f>0\}}f\dif M\le\int_{\{f>0\}}f\big(\dive_{T_xM}\nabla f-\Lambda|\nabla f|\big)\dif M\le0,
	\end{equation*}
	which contradicts the fact that $0\in\supp M$.
\end{proof}

\begin{proposition}\label{prop:max_brakke}
	Let $\Mb$ be a Brakke flow with transport term $v$ in $U\times[-\Omega,0]$ and assume that $\Lambda:=\|v\|_{L^\infty(M)}<\infty$. If $f\in C^2(U\times[-\Omega,0])$ is such that $f|_{\Sigma_\Mb}$ has a local maximum at $X_0=(x_0,t_0)\in\mathrm{Int}\,U\times(-\Omega,0]$, then
	\begin{equation*}
		\trace_m D^2f(X_0)-\de_tf(X_0)\le \Lambda|\nabla f(X_0)|.
	\end{equation*}
\end{proposition}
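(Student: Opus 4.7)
The plan is to argue by contradiction, closely mirroring the proof of \Cref{prop:max_principle} but substituting \eqref{eq:def_brakke} for the stationarity identity. Assume $\trace_m D^2 f(X_0) - \de_t f(X_0) > \Lambda|\nabla f(X_0)|$; by continuity, on a small parabolic cylinder $Q_r(X_0)$ one has $\trace_m D^2 f - \de_t f \ge \Lambda|\nabla f| + 2\delta$ for some $\delta>0$. After a translation I take $X_0 = (0,0)$; then I perturb $f$ by $-\eps(|x|^2 - t)$ to turn the local maximum on $\Sigma_\Mb$ into a \emph{strict} one at $(0,0)$ (the inequality above is preserved with $\delta$ in place of $2\delta$ for $\eps$ small), and finally I shift $f$ by an additive constant so that $f(0,0) = \eta/2$, $f \le -\eta/2$ on $\Sigma_\Mb \cap \bar Q_r$ outside a small neighbourhood of $(0,0)$, and $\eta\Lambda^2 \le 8\delta$. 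In particular $\{f > 0\} \cap \Sigma_\Mb$ is compactly contained in the interior of $Q_r$.

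I then test \eqref{eq:def_brakke} with $\phi := (f^+)^2/2$, multiplied if necessary by a spacetime cutoff $\chi \in C^\infty_c(Q_r)$ equal to $1$ on $\{f \ge 0\} \cap \Sigma_\Mb$; the cutoff is irrelevant on $\supp M_t$, so in the computation one effectively has $\nabla\phi = f^+\nabla f$ and $\de_t\phi = f^+\de_t f$. Between $t_1 = -r^2$ and any $t_2 \in (-r^2, 0]$, the right-hand side of \eqref{eq:def_brakke} expands to
\begin{equation*}
\int_{-r^2}^{t_2}\int\Big(f^+\de_t f + f^+\nabla f\cdot H - \tfrac{(f^+)^2}{2}(|H|^2 + H\cdot v^\perp) + f^+\nabla f\cdot v^\perp\Big)\dif M_t\dif t,
\end{equation*}
and the decisive manipulation --- as in \Cref{prop:max_principle} --- is to absorb the cross term via the first variation (after mollification, to handle the $C^{1,1}$ regularity of $\phi$):
\begin{equation*}
\int f^+\nabla f\cdot H\dif M_t = -\int \dive_{T_xM_t}(f^+\nabla f)\dif M_t \le -\int f^+\,\trace_m D^2 f\dif M_t,
\end{equation*}
using $\dive_{T_xM_t}(f^+\nabla f) \ge f^+\dive_{T_xM_t}\nabla f \ge f^+\trace_m D^2 f$ on $\{f>0\}$.

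Combining this with the pointwise hypothesis $\de_t f - \trace_m D^2 f \le -\Lambda|\nabla f| - \delta$ and with two Young-type estimates for the transport terms --- namely $f^+\nabla f\cdot v^\perp \le \Lambda f^+|\nabla f|$ to cancel the gradient contribution, together with $|H|^2 + H\cdot v^\perp \ge -\Lambda^2/4$ (completing the square) so that $-\tfrac{(f^+)^2}{2}(|H|^2 + H\cdot v^\perp) \le \tfrac{\Lambda^2}{8}(f^+)^2 \le \tfrac{\eta\Lambda^2}{16}f^+ \le \tfrac{\delta}{2}f^+$ on $\supp M_t$ --- I obtain
\begin{equation*}
M_{t_2}(\phi(\cdot, t_2)) - M_{-r^2}(\phi(\cdot, -r^2)) \le -\tfrac{\delta}{2}\int_{-r^2}^{t_2}\int f^+\dif M_t\dif t \le 0.
\end{equation*}
The contradiction is now immediate: the strict-maximum construction forces $f < 0$ on $\supp M_{-r^2}$, so $M_{-r^2}(\phi(\cdot, -r^2)) = 0$; on the other hand, $(0,0) \in \Sigma_\Mb$ produces a sequence $(x_j, t_j) \to (0,0)$ with $x_j \in \supp M_{t_j}$, and continuity of $\phi$ together with $\phi(0,0) > 0$ yields $M_{t_j}(\phi(\cdot, t_j)) > 0$ for $j$ large enough that $t_j > -r^2$, violating the displayed estimate.

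I expect the main technical hurdle to be the handling of the transport term $v^\perp$. It is responsible for the mixed product $-\tfrac{(f^+)^2}{2}H\cdot v^\perp$, which has to be absorbed against the favourable quadratic $-\tfrac{(f^+)^2}{2}|H|^2$ by completing the square, producing an error of order $\Lambda^2(f^+)^2$. This is precisely why the argument works with $\phi = (f^+)^2/2$ rather than the simpler $\phi = f^+$ implicit in \Cref{prop:max_principle}, and why the ``height'' $\sup f^+ = \eta/2$ must be chosen small.
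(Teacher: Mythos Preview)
Your argument is correct and follows essentially the same route as the paper's: contradict the inequality, arrange a strict maximum with $f(0,0)$ small compared to $\delta/\Lambda^2$, test \eqref{eq:def_brakke} with a power of $f^+$, convert $\nabla\phi\cdot H$ into $-\dive_{T_xM_t}\nabla\phi$ via the first variation, and absorb the $v^\perp$ terms so that the strict parabolic inequality forces the integrand to be negative. The only noteworthy difference is the exponent: the paper takes $\phi=(f^+)^4$ rather than your $\phi=(f^+)^2/2$, which makes $\nabla\phi=4(f^+)^3\nabla f\in C^1$ and hence directly admissible in \eqref{eq:first_var}, avoiding the mollification step you need for the merely Lipschitz vector field $f^+\nabla f$; the paper also uses the cruder Young bound $-\phi H\cdot v^\perp\le \phi|H|^2+\phi|v|^2$ in place of your completion of the square, but this is cosmetic.
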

\begin{proof}
		\newcommand{\backball}{\pball^-}
		Up to a translation, we assume that $(x_0,t_0)=(0,0)$. Moreover, we assume that $f|_{\Sigma\cap\{t\le0\}}$ has a strict local maximum at $(0,0)$.
		Assume the result does not hold. In particular, since $f\in C^2$, we can choose $\rho>0$ and $\eps>0$ small so that:
		\begin{enumerate}
			\item \label{point:first} $\de_tf-\trace_m D^2f+\Lambda|\nabla f|<-\eps<0$ in $Q_\rho$;
			\item \label{point:last} $0<f(0,0)\le\frac{2\eps}{\Lambda^2}$ and $f<0$ in $\Sigma\cap\{t\le0\}\setminus Q_\rho$;
		\end{enumerate}
		where the second point holds up to adding a constant to $f$. 
		We now let $\phi(x,t) = (f^+(x,t))^4$, where $f^+=\max\{f,0\}$, and we use $\phi$ as a test function for \eqref{eq:def_brakke}. Since $\phi(\cdot, -\rho^2)=0$ by assumption, we have
		\begin{equation}\label{eq:max_princ_chain}
			\begin{split}
			0&\le\int\phi(\cdot,0)\dif M_{0} - \int\phi(\cdot,-\rho^2)\dif M_{-\rho^2}\\
			&\le\int_{-\rho^2}^0\int\left(\de_t\phi + (-\phi H+\nabla\phi)\cdot(H+v^\perp)  \right)\dif M_t\dif t.
			\end{split}
		\end{equation}
		We now have
		\begin{equation}\notag
			\int H\cdot\nabla\phi\dif M_t = -\int\dive_{T_xM_t}\nabla\phi\dif M_t
		\end{equation}
		for a.e. $t$, and we may bound $-\phi H\cdot v^\perp\le\phi|H|^2+\phi|v|^2$. Therefore \eqref{eq:max_princ_chain} gives
		\begin{equation}\label{eq:A2}
			0\le\int_{-\rho^2}^0\int\left(\de_t\phi-\dive_{T_xM_t}\nabla\phi+\phi\Lambda^2+|\nabla\phi|\Lambda  \right)\dif M_t\dif t.
		\end{equation}
		Straightforward computations give
		\begin{gather*}
			|\nabla\phi|= 4(f^+)^3|\nabla f|,\qquad\qquad
			\dive_{T_xM_t}\nabla\phi = 4(f^+)^3\dive_{T_xM_t}\nabla f\ge4(f^+)^3\trace_mD^2f
		\end{gather*}
		and $\de_t\phi= 4(f^+)^3\de_tf$. Therefore \eqref{eq:A2} reads
		\begin{align}\label{eq:max_almost}
			0\le\int_{-\rho^2}^0\int4(f^+)^3\left(\de_tf-\trace_mD^2f+|\nabla f|\Lambda+\Lambda^2\frac{f^+}{4}\right)\dif M_t\dif t .
		\end{align}
		Since $f$ has a maximum at $0$, it holds $\Lambda^2\frac{f^+}{4}\le \Lambda^2\frac{f(0,0)}{4}\le\frac{\eps}{2}$, hence
		\begin{equation*}
			\de_tf-\trace_mD^2f+|\nabla f|\Lambda+\Lambda^2\frac{f^+}{4}\le -\eps + \frac{\eps}{2} = -\frac{\eps}{2}
		\end{equation*}
		in $Q_\rho\cap \{f\ge0\}$.
		The latter inequality and \eqref{eq:max_almost} yield $M(Q_\rho)=0$, which contradicts the assumption $(0,0)\in\Sigma_\Mb$.
%		Therefore
%		\begin{equation*}
%			0\le -2\eps\int_{Q_\rho}(f^+)^3\dif M,
%		\end{equation*}
%		which contradicts the fact that $(0,0)\in\Sigma_\Mb$.
\end{proof}

%\bibliographystyle{plain}
%\bibliography{citations2.bib}

\begin{thebibliography}{10}



\bibitem{allard1972}
William~K. Allard.
\newblock On the {First} {Variation} of a {Varifold}.
\newblock {\em The Annals of Mathematics}, 95(3):417, 1972.

\bibitem{Almgren1968}
Frederick~J. Almgren, Jr.
\newblock Existence and regularity almost everywhere of solutions to elliptic
  variational problems among surfaces of varying topological type and
  singularity structure.
\newblock {\em The Annals of Mathematics}, 87(2):321, 1968.

\bibitem{brakke}
Kenneth~A. Brakke.
\newblock{\em The motion of a surface by its mean curvature}.
\newblock Mathematical notes, vol. 20. Princeton University Press, Princeton, 1978.

\bibitem{caffarelli1998obstacle}
Luis~A Caffarelli.
\newblock The obstacle problem revisited.
\newblock {\em Journal of Fourier Analysis and Applications}, 4(4):383--402,
  1998.

\bibitem{caffarelli-cordoba}
Luis~A. Caffarelli and Antonio C{\'o}rdoba.
\newblock {An elementary regularity theory of minimal surfaces}.
\newblock {\em Differential and Integral Equations}, 6(1):1 -- 13, 1993.

\bibitem{degiorgi61}
Ennio De~Giorgi.
\newblock Frontiere orientate di misura minima.
\newblock {\em Seminario di Matematica della Scuola Normale Superiore di Pisa},
  1960--1961:57, 1961.
  
\bibitem{DeLellis_allard}
Camillo De Lellis.
\newblock Allard's interior regularity theorem: an invitation to stationary varifolds.
\newblock {\em Nonlinear analysis in geometry and applied mathematics},
2:23--49, 2018.

\bibitem{gaspa_brakke_bdry}
Carlo Gasparetto.
\newblock Epsilon-regularity for the {Brakke} flow with boundary.
\newblock ArXiv preprint, 2022.

\bibitem{gaspa_thesis}
Carlo Gasparetto.
\newblock {\em Epsilon-regularity for the mean curvature flow with boundary and
  transport term: a viscosity approach}.
\newblock PhD thesis, Scuola Internazionale Superiore di Studi Avanzati,
  Trieste, 2022.

\bibitem{ilmanen_thesis}
Tom Ilmanen.
\newblock Elliptic regularization and partial regularity for motion by mean
  curvature.
\newblock {\em Memoirs of the American Mathematical Society}, 108(520), 1994.

\bibitem{kasai_tonegawa}
Kota Kasai and Yoshihiro Tonegawa.
\newblock A general regularity theory for weak mean curvature flow.
\newblock {\em Calculus of Variations and Partial Differential Equations},
  50(1-2):1--68, 2014.

\bibitem{savin07}
Ovidiu Savin.
\newblock Small {Perturbation} {Solutions} for {Elliptic} {Equations}.
\newblock {\em Communications in Partial Differential Equations},
  32(4):557--578, 2007.

\bibitem{Savin2009}
Ovidiu Savin.
\newblock Regularity of flat level sets in phase transitions.
\newblock {\em Annals of Mathematics}, 169(1):41--78, 2009.

\bibitem{savin2017viscosity}
Ovidiu Savin.
\newblock Viscosity solutions and the minimal surface system.
\newblock In {\em Nonlinear {Analysis} in {Geometry} and {Applied}
  {Mathematics}, {Part} 2}, pages 135--145, Somerville, MA, 2018. International
  Press of Boston, Inc.

\bibitem{simonGMT}
Leon Simon.
\newblock {\em Lectures on geometric measure theory}.
\newblock Number~3 in Proceedings of the {Centre} for {Mathematical}
  {Analysis}. Centre for Mathematical Analysis, Australian National University,
  Canberra, 1984.

\bibitem{stuvard-tonegawa}
Salvatore Stuvard and Yoshihiro Tonegawa.
\newblock End-time regularity theorem for brakke flows.
\newblock ArXiv preprint, 2022.

\bibitem{tonegawa_second_der}
Yoshihiro Tonegawa.
\newblock A second derivative {Hölder} estimate for weak mean curvature flow.
\newblock {\em Advances in Calculus of Variations}, 7(1), 2014.

\bibitem{Tonegawabook}
Yoshihiro Tonegawa.
\newblock{\em Brakke’s Mean Curvature Flow: An Introduction.} SpringerBriefs in
Mathematics. Singapore: Springer Singapore, 2019

\bibitem{wangsmall}
Yu~Wang.
\newblock Small perturbation solutions for parabolic equations.
\newblock {\em Indiana University Mathematics Journal}, 62(2):671--697, 2013.

\bibitem{white_notes}
Brian White.
\newblock Mean curvature flow (math 258) lecture notes.
\newblock Notes by Otis Chodosh, available online at
  \url{http://web.stanford.edu/~ochodosh/MCFnotes.pdf}, 2015.

\bibitem{white_areablowup}
Brian White.
\newblock Controlling area blow-up in minimal or bounded mean curvature
  varieties.
\newblock {\em Journal of Differential Geometry}, 102(3), 2016.

\end{thebibliography}

\end{document}